\setlist{nosep}
\newcommand{\mcL}{\mathcal{L}}
\newcommand{\mcX}{\mathcal{X}}
\newcommand{\BX}{B(\mcX)}
\newcommand{\Y}{\mathcal{Y}}
\newcommand{\inpr}[2]{\langle#1,#2\rangle}
\newcommand{\inprX}[2]{\langle#1,#2\rangle_X}
\newcommand{\inprY}[2]{\langle#1,#2\rangle_Y}
\newcommand{\semin}[2]{\langle\!\langle#1,#2\rangle\!\rangle}
\newcommand{\inprPRS}[2]{\langle#1,#2\rangle_{\Pi_{\mathcal{S}},P^{1/2}}}
\newcommand{\T}{\mathcal{T}}
\newcommand{\jac}[1]{D\mkern-2.5mu{#1}}
\newcommand{\R}{\mathbb{R}}
\renewcommand{\ker}{\mathrm{Ker}}
\newcommand{\im}{\mathrm{Im}}
\newcommand{\op}{\operatorname{op}}
\newcommand{\until}[1]{\{1,\dots, #1\}}
\newtheorem{theorem}{Theorem}[section]
\newtheorem{lemma}{Lemma}[section]
\newtheorem{remark}[theorem]{Remark}
      \theoremstyle{plain}
\newtheorem{definition}{Definition}[section]
\DeclareSymbolFont{bbold}{U}{bbold}{m}{n}
\DeclareSymbolFontAlphabet{\mathbbold}{bbold}
\newcommand{\vect}[1]{\mathbbold{#1}}
\newcommand{\real}{\ensuremath{\mathbb{R}}}
\newcommand\norm[1]{\left\lVert#1\right\rVert}
\renewcommand{\norm}[1]{\|#1\|}
\newcommand{\snorm}[1]{{\left\vert\kern-0.25ex\left\vert\kern-0.25ex\left\vert #1 
    \right\vert\kern-0.25ex\right\vert\kern-0.25ex\right\vert}}
\newcommand\oprocendsymbol{\hbox{$\square$}}
\newcommand\oprocend{\relax\ifmmode\else\unskip\hfill\fi\oprocendsymbol}
\DeclareSymbolFont{bbold}{U}{bbold}{m}{n}
\DeclareSymbolFontAlphabet{\mathbbold}{bbold}
\title{Contraction Theory for Dynamical Systems \\ on Hilbert Spaces}
\author{Pedro Cisneros-Velarde, Saber Jafarpour, Francesco Bullo
  \thanks{Pedro Cisneros-Velarde (pacisne@gmail.com), Saber Jafarpour and Francesco Bullo (\{saber,bullo\}@ucsb.edu) are with the Center for
    Control, Dynamical Systems and Computation, University of California,
    Santa Barbara.}
    }    
\date{}
\begin{document}
\maketitle

\begin{abstract}
  Contraction theory for dynamical systems on Euclidean spaces is
  well-established.
  For contractive (resp. semi-contractive) systems, the distance
  (resp. semi-distance) between any two trajectories decreases
  exponentially fast.
  For partially contractive systems, each trajectory converges
  exponentially fast to an invariant subspace.

  In this note, we develop contraction theory on Hilbert spaces.
  First, we provide a novel integral
  condition for contractivity, and for time-invariant systems, we establish the existence of a unique
  globally exponentially stable equilibrium.
  Second, we introduce the notions of partial and semi-contraction and we
  provide various sufficient conditions for time-varying and time-invariant
  systems.
  Finally, we apply the theory on a classic reaction-diffusion system.
\end{abstract}


\textbf{Keywords:}
contraction theory, Hilbert spaces,  Banach spaces, differential equations, stability

\section{Introduction}

\paragraph*{Problem statement and motivation}
Contraction theory establishes the exponential incremental stability of
ordinary differential equations. Its mature development can be traced back
to the work by Coppel~\cite{WAC:1965}, where linear systems were studied,
and to the textbook treatment by Vidyasagar~\cite{MV:02}. Later, a
reformulation was proposed in the seminal work by
Slotine~\cite{WL-JJES:98}. We refer to~\cite{ZA-EDS:14b} for an
introduction and a survey of applications on contraction theory, and
to~\cite{JWSP-FB:12za} for extensions to Riemannian manifolds.
Generalizations of the classical contraction theory have been proposed in
the literature. The notion of partial contraction, first introduced
in~\cite{QCP-JJS:07}, studies the exponential convergence of trajectories
to invariant
subspaces~\cite{QCP-JJS:07,MdB-DF-GR-FS:16}. Recently,~\cite{SJ-PCV-FB:19q}
introduces the concept of semi-contraction, which establishes the
exponential incremental semi-stability of trajectories. Contraction
theory has also been used for control design~\cite{IRM-JJES:17,HSS-MR-IRM:19},  
and extended to non-differentiable and discontinuous vector fields~\cite{WL-MdB:16,DF-SJH-MdB:16,SAB-TL-SDC:18} and dynamics on Finsler manifolds~\cite{FF-RS:14}.

To the best of our knowledge, a general contraction theory on Hilbert and
Banach spaces is missing. The importance of working with systems defined on
such general space is, for example, the wide scope of possible applications
of systems based on partial differential equations, delayed differential
equations, functional differential equations, and integro-differential
equations (e.g., see~\cite[Chapter~9]{ANM-LH-DL:08}). The purpose of this
note is to concisely present such a theory, with the hope that it will be
relevant in both theoretical and applied work.  We also provide an
application example to illustrate the theory.  This work builds a bridge
between the abstract theory of differential equations developed in
mathematics~\cite{JLD-MGK:02}\cite{GEL-VL:72} and the widely-established
contraction theory in the field of systems and control.

\paragraph*{Literature review}
To the best of our knowledge, a first approach to contraction theory on
general Banach spaces can be traced back to the 1972 book by Ladas \&
Lakshmikantham~\cite{GEL-VL:72}, in its Lemma~5.4.1 and~5.4.2.  However,
these results do not parallel much of the richer development of contraction
theory in Euclidean spaces (see our Contributions below).  Interestingly,
the results in~\cite{GEL-VL:72} seem to be unknown in the literature on
contraction theory, which developed decades later. Applications of
contraction theory have been proposed to specific classes of partial
differential equations~\cite{ZA-EDS:14b,ZA-YS-MA-EDS:14,ZA-ES:13} and more
recently to functional differential equations~\cite{PHAN-HT:18}.  Besides
these notable exceptions, the study of contraction theory on infinite
dimensional systems has not received the same development as the Euclidean
case, e.g., no concept of semi- or partially contractive systems on Hilbert
spaces exists in the literature either.

In the controls community, the recent
works~\cite{ATJ-VA-VDSM-CX:19,MK:19} have considered dynamical
systems on Banach and Hilbert spaces and their applications to PDEs. Other
recent interests in dynamical systems on these abstract spaces include
controller design~\cite{AP-JC-LR:18}, event-triggered
control~\cite{MW-HS:20}, observability studies~\cite{DG-CS-MT:20}, optimal
control~\cite{ET-CZ-EZ:18}, and stability
characterizations~\cite{AM-FW:18}.


\paragraph*{Contributions}
First, we review two little-known results from~\cite{GEL-VL:72} that
establish a generalization of Coppel's inequality to Banach spaces and that
provide some sufficient conditions for contraction using \emph{operator
  measures} when the vector fields are continuously differentiable.  Then,
we prove that every time-invariant contractive system has a unique globally
exponentially stable equilibrium point. We also provide a sufficient
condition using operator measures for when the norm of a time-invariant
system has its vector field exponentially decreasing on trajectories of the
system.  In the case of systems on Hilbert spaces, we introduce a simpler
sufficient condition for contraction without the differentiability
requirement on the vector field: the \emph{integral contractivity
  condition}. Moreover, under the differentiability requirement, we prove:
(i) that the condition using operator measures presented
in~\cite{GEL-VL:72} can be relaxed and still imply contraction (in
particular, it is no longer needed to check a bound for Fr\'{e}chet
  derivative of the vector field); (ii) the integral contractivity
condition is implied by the one using operator measures.

Second, associated with a surjective linear operator $\T$, we introduce the
concepts of \emph{$\T$-seminorms} and \emph{$\T$-operator semi-measures}
which can be considered as generalization of recently introduced concepts
in the study of the classical Euclidean setting~\cite{SJ-PCV-FB:19q}.
Then, we introduce the concepts of partial and semi-contraction for systems
on Hilbert spaces. Using the concepts of seminorms and semi-measures, we
provide sufficient conditions for partial contraction and semi-contraction.
We present a series of novel results.  Firstly, we introduce the
\emph{integral partial contractivity condition}, a sufficient condition for
partial contraction.  Secondly, we introduce the \emph{integral
  semi-contractivity condition}, a sufficient condition for
semi-contraction. For continuous differentiable vector fields, we prove
this condition is implied by another sufficient condition for
semi-contraction using operator semi-measures. When there exists an
invariant subspace for the system, our conditions for semi-contraction
imply partial contraction. We remark that, to the best of our knowledge,
our characterization of partial and semi-contraction using integral
conditions are new even in the classic Euclidean setting (with the usual
inner product); e.g. as studied in the work~\cite{QCP-JJS:07} and in our
previous work~\cite{SJ-PCV-FB:19q}.

Finally, we present an example of a reaction-diffusion system and use
partial contraction to prove the same result as~\cite{MA:11}; moreover, we
establish semi-contraction when the reaction term is linear in the state
variable.
%

\paragraph*{Paper organization} 
Section~\ref{sec:prelim} has preliminaries and
notation. Sections~\ref{sec:contr_main} and~\ref{sec:part-contr-HS} contain
the main results on Banach and Hilbert spaces. Section~\ref{sec:examples}
presents the application example and Section~\ref{sec:concl} is the
conclusion.

\section{Preliminaries and notation}
\label{sec:prelim}

\subsection{Notation, definitions and useful results}

A \emph{Banach space} is a complete normed vector space $(\mcX,\norm{\cdot})$, where $\mcX$ is a
vector space and $\norm{\cdot}$ a norm over $\mcX$.  A \emph{Hilbert space}
is a pair $(\mcX,\inpr{\cdot}{\cdot})$, where $\mcX$ is a vector space and
$\inpr{\cdot}{\cdot}$ is an inner product over $\mcX$, such that its induced norm $\norm{\cdot}:=\sqrt{\inpr{\cdot}{\cdot}}$ makes the space a Banach space. In what follows we
assume $\mcX$ is a vector space over the field of real numbers.

Let $B(\mcX)$ be the space of bounded linear operators with domain and codomain
$\mcX$. Let $0$ be the null element of $\mcX$, or the number zero,
depending on the context.  Let $I$ be the identity operator.  Given 
$A\in\BX$, $\norm{A}_{\op}=\sup_{\substack{x\neq
    0\\x\in\mcX}}\frac{\norm{Ax}}{\norm{x}}$ is its associated operator
norm.  Given an open set $\Omega\subseteq\mcX$, we say a function
$H:\mcX\to\mcX$ is continuously Fr\'{e}chet differentiable in $\Omega$ when
$H$ is Fr\'{e}chet differentiable at each $x_o\in\Omega$ (i.e., $DH(x_o)\in\BX$ such that $\lim_{\norm{e}\to 0}\frac{\norm{H(x_0+e)-H(x_0)-DH(x_0)e}}{\norm{e}}=0$
exists~\cite{RA-JEM-TSR:88}) and $DH:\Omega\to\BX$ is continuous.
Finally, we say a subspace $\mathcal{V}\subset\mcX$ is invariant for $A\in\mcX$ if for any $x\in\mathcal{V}$ then $Ax\in\mathcal{V}$.

Let $I_n$ be the $n\times n$ identity matrix and $\vect{0}_n\in\real^n$ be
the all-zeros column vector with $n$ entries.

The concept of matrix measures or logarithmic norm, e.g., see~\cite{ZA-EDS:14b}, can be generalized as the following; e.g., see~\cite[Definition~5.4.2]{GEL-VL:72},

\begin{definition}[Operator measure]
\label{def:oper-ms}
Let $A\in\BX$ and define the \emph{operator measure} of $A$ as:
$$
\mu(A)=\lim_{h\to 0^+}\frac{\norm{I+hA}_{\op}-1}{h}.
$$
\end{definition}

\subsection{Dynamical systems on Banach spaces}

Given the Banach space $(\mcX,\norm{\cdot})$ and the vector field
$F:\R\times{\mcX}\to \mcX$, consider the differential equation:
\begin{equation}
\label{eq:dyn_main}
\dot{x}=F(t,x)
\end{equation}
with $\dot{x}:=\frac{dx}{dt}$. Following closely the setting
in~\cite[Chapter~9]{ANM-LH-DL:08}, a continuous function
$\phi:[t_0,t_0+c)\to \mcX$, $c>0$, is a solution of~\eqref{eq:dyn_main} if
  it is differentiable with respect to $t$ for $t\in[t_0,t_0+c)$ and if
    $\phi$ satisfies the equation $\dot{\phi}=F(t,\phi(t))$ for all
    $t\in[t_0,t_0+c)$. When the system~\eqref{eq:dyn_main} is associated
      the initial condition $x(t_0)=x_0$, we have an initial value problem
      or Cauchy problem. In this paper we assume that (i) $t\mapsto F(t,x)$ is continuous 
      on $t\in\R_{\geq0}$ for any $x\in\mcX$, and that (ii) for any $x_0\in\mcX$,
      there exists at least one solution $\phi(t,t_0,x_0)$ to the initial
      value problem with $x(t_0)=x_0=\phi(t_0,t_0,x_0)$ for all $t\geq t_0$, 
      $t_0\in\R_{\geq0}$.

We say that a set $\mathcal{U}$ is (positively) invariant for the
system~\eqref{eq:dyn_main} if $\phi(t',t_0,x_0)\in\mathcal{U}$ at some time
$t'\geq t_0$ implies $\phi(t,t_0,x_0)\in\mathcal{U}$ for any $t\geq t'$.

The dynamical system~\eqref{eq:dyn_main} is time-invariant whenever the
vector field $F$ is time-invariant, i.e., $F$ does not explicitly depend on
$t$.  If the system~\eqref{eq:dyn_main} is time-invariant, it has an
equilibrium point $x^*$ if $F(x^*)=0$.

The system~\eqref{eq:dyn_main} has exponential 
  incremental stability or is \emph{contracting} with respect to norm $\norm{\cdot}$ if, for any $x_0,y_0\in\mcX$, the  
  trajectories $\phi(t,t_0,x_0)$
  and $\phi(t,t_0,y_0)$ for any $t\geq t_0$ satisfy
  $\norm{\phi(t,t_0,x_0)-\phi(t,t_0,y_0)}\leq e^{-c(t-t_0)}\norm{x_0-y_0}$, for some $c>0$. 
In the Euclidean case, a central tool for studying contractivity is the matrix
measure~\cite{ZA-EDS:14b}, i.e., the operator measure taking matrices as arguments.

%
%

\section{Contraction on Banach and Hilbert spaces}
\label{sec:contr_main}

The following Lemma~\ref{thm:coppel} was proved
in~\cite[Lemma~5.4.1]{GEL-VL:72},\footnote{The
  result~\cite[Lemma~5.4.1]{GEL-VL:72} does not prove the left inequality
  in equation~\eqref{mu_1}, but this follows immediately from the same
  proof.} and the next Theorem~\ref{theorem_princ2} is an application
of~\cite[Lemma~5.4.2]{GEL-VL:72}. These are the only two existing results
from the scarce literature on contraction on Banach spaces that we use.

\begin{lemma}[Coppel's inequality for Banach spaces {\cite[Lemma~5.4.1]{GEL-VL:72}}]
  \label{thm:coppel}
    Consider the linear time-varying dynamical system
    \begin{align*}
      \dot{x}(t) = A(t) x(t)
    \end{align*}
    on the Banach space $(\mcX,\norm{\cdot})$, with $A(t)\in\BX$ and $t\mapsto A(t)$ being continuous for every $t\in\R_{\geq 0}$. Suppose that $\phi(t,t_0,x_0)$ is a solution of the Cauchy problem, then 
    \begin{equation}
    \label{mu_1}
      \norm{x_0}\exp\left(\int_{t_0}^{t} -\mu(-A(\tau))d\tau\right)\le
      \|\phi(t,t_0,x_0)\| \le \norm{x_0}\exp\left(\int_{t_0}^{t}\mu(A(\tau))d\tau\right).
      \end{equation}
    \end{lemma}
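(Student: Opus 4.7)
The plan is to reduce this infinite-dimensional statement to a one-dimensional comparison argument for the scalar function $V(t):=\|\phi(t,t_0,x_0)\|$, mimicking Coppel's original strategy but with the operator norm playing the role previously played by an induced matrix norm. The differentiability of $\phi$ together with the Banach-space Definition~\ref{def:oper-ms} will allow me to bound the upper right Dini derivative $D^+V(t)$ from both sides, and then a standard Gronwall-type comparison lemma applied to these two scalar differential inequalities will produce the two inequalities in~\eqref{mu_1}.

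First, for the upper bound, I would use $\phi(t+h)=\phi(t)+hA(t)\phi(t)+o(h)$ for $h\to 0^+$, which follows because $\phi$ solves the ODE and $\tau\mapsto A(\tau)$ is continuous. The triangle inequality and the definition of the operator norm then give
\[
\|\phi(t+h)\|\le \|(I+hA(t))\phi(t)\|+o(h)\le \|I+hA(t)\|_{\op}\|\phi(t)\|+o(h).
\]
Subtracting $\|\phi(t)\|$, dividing by $h$, and letting $h\downarrow 0$ yields $D^+V(t)\le \mu(A(t))V(t)$ by Definition~\ref{def:oper-ms}.

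For the lower bound, I would rewrite $\phi(t)=(I-hA(t))\phi(t+h)+o(h)$, using $A(t)\phi(t+h)=A(t)\phi(t)+O(h)$ to absorb the mismatch into the $o(h)$ term. Taking norms and applying the operator-norm bound gives $\|\phi(t)\|\le \|I-hA(t)\|_{\op}\|\phi(t+h)\|+o(h)$, and rearranging leads to
\[
\frac{\|\phi(t+h)\|-\|\phi(t)\|}{h}\ge \frac{1-\|I-hA(t)\|_{\op}}{h}\,\|\phi(t+h)\|-\frac{o(h)}{h\,\|I-hA(t)\|_{\op}}.
\]
Passing to $h\downarrow 0$ and recognizing the limit of the first factor as $-\mu(-A(t))$ yields $D^+V(t)\ge -\mu(-A(t))V(t)$. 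Both bounds are then lifted to the claimed exponential estimates by the scalar Gronwall comparison lemma for Dini derivatives, using continuity of $t\mapsto A(t)$ and of $\mu$ (which is $1$-Lipschitz in the operator norm) to ensure integrability of $\mu(\pm A(\tau))$.

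The main technical obstacle I anticipate is the bookkeeping of the $o(h)$ remainders: in infinite dimensions one must be careful that the error in $\phi(t+h)-\phi(t)-hA(t)\phi(t)$ is controlled in the ambient norm (which it is, by Fréchet differentiability of $\phi$) and that swapping $\phi(t)$ for $\phi(t+h)$ inside $A(t)(\cdot)$ only introduces $O(h)$ error (which uses boundedness of $A(t)$). A secondary delicate point is handling $V(t)=0$; this is easily dispatched because either $x_0=0$ and both inequalities are trivial, or $x_0\ne 0$ and the lower estimate $D^+V\ge -\mu(-A(t))V$ prevents $V$ from reaching zero in finite time, so the Gronwall comparison applies on the whole interval. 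All other steps are essentially dimension-free repetitions of Coppel's classical argument.
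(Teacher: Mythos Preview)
Your argument is correct and is the standard proof of this result. Note, however, that the paper does not supply its own proof of Lemma~\ref{thm:coppel}: it is quoted verbatim as \cite[Lemma~5.4.1]{GEL-VL:72}, with a footnote remarking that the left inequality in~\eqref{mu_1}, absent from the original statement, ``follows immediately from the same proof.'' What you have written is precisely that classical proof --- bounding the upper right Dini derivative of $t\mapsto\|\phi(t)\|$ above by $\mu(A(t))\|\phi(t)\|$ and below by $-\mu(-A(t))\|\phi(t)\|$, then integrating via the scalar comparison lemma --- so there is nothing to contrast beyond noting that your write-up fills in the details the paper leaves to the reference.
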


\begin{theorem}[Contraction with operator measures on Banach Spaces.  {\cite[Lemma~5.4.2]{GEL-VL:72}}]
\label{theorem_princ2}
Consider the dynamical system~\eqref{eq:dyn_main} on the Banach space
$(\mcX,\norm{\cdot})$ with $F(t,\cdot)$ continuously Fr\'{e}chet
differentiable for each $t$ and such that $\norm{DF(t,u)}_{\op}\leq a(t)$ for any
$u\in\mcX$, and for some continuous function $a(t)\geq 0$, $t\geq 0$. Assume that $\mu(DF(t,x))\leq -c(t)$ for any $x\in\mcX$, and some continuous function $c(t)>0$, $t\geq 0$.
Then the system~\eqref{eq:dyn_main} is \emph{contractive},
i.e.,
\begin{equation}
\label{eq:ctr-1}
\norm{\phi(t,t_0,x_0)-\phi(t,t_0,x_0')}\leq e^{-\int_{t_0}^tc(s)ds}\|x_0-x_0'\|
\end{equation}
for all $t\geq t_0$ and any $x_0,x_0'\in\mcX$.
\end{theorem}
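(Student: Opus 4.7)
The plan is to reduce this nonlinear contraction claim to the linear time-varying setting already handled by Coppel's inequality (Lemma~\ref{thm:coppel}). Fix $x_0, x_0' \in \mcX$, let $\phi_1(t):=\phi(t,t_0,x_0)$ and $\phi_2(t):=\phi(t,t_0,x_0')$ denote the two solutions, and set $e(t):=\phi_1(t)-\phi_2(t)$, so that $e(t_0)=x_0-x_0'$.

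First, I would express $\dot{e}$ as a linear time-varying system. By the fundamental theorem of calculus applied to the Fr\'echet-differentiable map $x\mapsto F(t,x)$,
\begin{equation*}
F(t,\phi_1(t)) - F(t,\phi_2(t)) \;=\; A(t)\, e(t), \qquad A(t):=\int_0^1 DF\bigl(t,\phi_2(t)+s\, e(t)\bigr)\, ds.
\end{equation*}
The hypothesis $\|DF(t,u)\|_{\op}\le a(t)$ uniformly in $u$ makes the integrand norm-bounded and hence Bochner integrable, so $A(t)\in B(\mcX)$ with $\|A(t)\|_{\op} \le a(t)$. Consequently $e$ satisfies the linear time-varying Cauchy problem $\dot{e}(t)=A(t)\,e(t)$ with $e(t_0)=x_0-x_0'$.

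Second, I would invoke Lemma~\ref{thm:coppel} applied to this linear system to obtain
\begin{equation*}
\|e(t)\|\;\le\;\|e(t_0)\|\exp\!\left(\int_{t_0}^t \mu(A(\tau))\, d\tau\right),
\end{equation*}
and then estimate $\mu(A(\tau))$ using sublinearity (positive homogeneity and subadditivity) of the operator measure. A Riemann-sum approximation of the Bochner integral combined with subadditivity of $\mu$ yields the Jensen-type bound
\begin{equation*}
\mu(A(\tau))\;\le\;\int_0^1 \mu\bigl(DF(\tau,\phi_2(\tau)+s\,e(\tau))\bigr)\,ds\;\le\;-c(\tau),
\end{equation*}
where the last inequality uses the hypothesis $\mu(DF(\tau,x))\le -c(\tau)$ uniformly in $x$. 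Substituting into Coppel's bound yields exactly~\eqref{eq:ctr-1}.

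The principal obstacle I expect is the regularity required to invoke Lemma~\ref{thm:coppel}: namely, that $\tau\mapsto A(\tau)$ is continuous as a map into $B(\mcX)$. This depends on continuity of the trajectories $\phi_1,\phi_2$ together with joint continuity of $DF$ (in $t$ via the continuity of $F(\cdot,x)$, and in the spatial argument via continuous Fr\'echet differentiability), plus a dominated-convergence argument using the uniform bound $a(t)$. The Jensen-type step for $\mu$ requires a short justification, which I would give by noting that $\mu$ is $1$-Lipschitz with respect to $\|\cdot\|_{\op}$ (an immediate consequence of subadditivity and $\mu(B)\le\|B\|_{\op}$), so the Riemann sums converge appropriately. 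These verifications aside, the core argument is a direct combination of a variation-of-constants reduction with Lemma~\ref{thm:coppel}.
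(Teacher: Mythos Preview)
Your argument is correct, but it differs in spirit from the paper's own proof, which does not re-derive the result at all: it simply checks that the bound $\|DF(t,u)\|_{\op}\le a(t)$ and the uniqueness property of the scalar equation $\dot r = a(t)r$ place us in the setting of \cite[Theorem~5.3.3]{GEL-VL:72}, and then invokes \cite[Lemma~5.4.2]{GEL-VL:72} as a black box. By contrast, you give a self-contained derivation: you linearize the error dynamics via the mean-value integral $A(t)=\int_0^1 DF(t,\phi_2+se)\,ds$, apply Coppel's inequality (Lemma~\ref{thm:coppel}) directly, and then push $\mu$ through the Bochner integral using subadditivity plus a Riemann-sum/Lipschitz argument. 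This is precisely the machinery the paper deploys \emph{later}, in the proof of Theorem~\ref{integral-contractivity}\ref{it:2-ic} for the Hilbert-space case (where the Moore--Osgood theorem is used to justify the same Jensen-type step you identify). So your route is not just an alternative --- it is essentially the explicit content of the cited lemma, and it has the advantage of staying within the tools the paper itself states. The paper's version is terser but less transparent; yours requires verifying the continuity of $\tau\mapsto A(\tau)$ to invoke Lemma~\ref{thm:coppel}, which you correctly flag as the main technical wrinkle.
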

The proof can be found in the Appendix. The function $c(t)>0$, $t\geq 0$, in Theorem~\ref{theorem_princ2} is the \emph{contraction rate}. When the contraction rate is time-invariant, then a contractive system has incremental exponential stability.
Beginning now, all 
the following results presented in this paper are novel.
%
%
We present additional properties for contractive systems when the vector field is time-invariant.

\begin{theorem}[Properties of time-invariant systems]
\label{theorem_princ3}
Consider the dynamical system~\eqref{eq:dyn_main} on the Banach space
$(\mcX,\norm{\cdot})$ with $F$ time-invariant. Pick  $c>0$.
  \begin{enumerate}
\item\label{p2:contractive}If the system is contractive
with 
contraction rate $c$, then there exists a unique globally exponentially stable
  equilibrium point $x^*$ such that
\begin{equation*}
  \norm{\phi(t,t_0,x_0)-x^*}\leq e^{-c(t-t_0)}\|x_0-x^*\|,
\end{equation*}
for all $t\geq t_0$ and any $x_0\in\mcX$.
  \item\label{p3:contractive} 
  If $F$ is continuously Fr\'{e}chet differentiable and $\mu(\jac F(x))\le -c$ for every  $x\in\mcX$, then $\|F(\phi(t,t_0,x_0))\|\le e^{-c(t-t_0)}\|F(x_0)\|$, 
for all $t\geq t_0$ and any $x_0\in\mcX$.
  \end{enumerate}
\end{theorem}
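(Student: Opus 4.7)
The proof splits into two largely independent parts. For part~(i), the plan is to combine the Banach fixed point theorem with the semigroup structure of the time-invariant flow. Fix any $T>0$ and define $\psi_T:\mcX \to \mcX$ by $\psi_T(x) = \phi(T, 0, x)$. By the contractivity hypothesis, $\norm{\psi_T(x) - \psi_T(y)} \le e^{-cT}\norm{x - y}$, so $\psi_T$ is a strict contraction on the complete metric space $\mcX$; the Banach contraction principle then yields a unique fixed point $x^*$ with $\phi(T, 0, x^*) = x^*$.

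To upgrade the fixed-point property from time $T$ to every time, I would invoke time-invariance, which together with uniqueness of trajectories (itself an immediate consequence of contractivity applied to two solutions from the same initial condition) yields the semigroup identity $\phi(t+s, 0, x) = \phi(t, 0, \phi(s, 0, x))$. For any $s\ge 0$, this gives $\phi(T, 0, \phi(s, 0, x^*)) = \phi(T+s, 0, x^*) = \phi(s, 0, \phi(T, 0, x^*)) = \phi(s, 0, x^*)$, so $\phi(s, 0, x^*)$ is also a fixed point of $\psi_T$; uniqueness forces $\phi(s, 0, x^*) = x^*$, and differentiating in $s$ yields $F(x^*) = 0$. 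Uniqueness of the equilibrium is then immediate: any other equilibrium $y^*$ would satisfy $\norm{x^* - y^*} = \norm{\phi(t, 0, x^*) - \phi(t, 0, y^*)} \le e^{-ct}\norm{x^* - y^*}$ for all $t\ge 0$, forcing $y^* = x^*$. Global exponential stability follows by applying the contraction inequality to the initial conditions $x_0$ and $x^*$, together with $\phi(t, t_0, x^*) = x^*$.

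For part~(ii), the plan is to observe that the vector field evaluated along a trajectory satisfies a linear time-varying ODE in $\BX$, and then invoke the Coppel-type inequality on Banach spaces (Lemma~\ref{thm:coppel}). Write $\varphi(t) := \phi(t, t_0, x_0)$ and $y(t) := F(\varphi(t))$. Since $F$ is continuously Fr\'echet differentiable, the chain rule yields $\dot y(t) = DF(\varphi(t))\,\dot\varphi(t) = DF(\varphi(t))\,F(\varphi(t)) = A(t)\,y(t)$, where $A(t) := DF(\varphi(t)) \in \BX$; continuity of $t\mapsto A(t)$ follows from continuity of $DF$ and of the trajectory. Lemma~\ref{thm:coppel} then gives $\norm{y(t)} \le \norm{y(t_0)} \exp\bigl(\int_{t_0}^{t} \mu(A(\tau))\, d\tau\bigr)$, and the hypothesis $\mu(DF(x)) \le -c$ bounds the integrand uniformly by $-c$, producing the claimed bound. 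The main subtlety is in part~(i), namely confirming that the fixed point of $\psi_T$ is fixed at \emph{every} continuous time; this is resolved by the commuting semigroup argument above, while part~(ii) is essentially a direct reduction to Lemma~\ref{thm:coppel}.
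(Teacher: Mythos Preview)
Your proof is correct and follows essentially the same route as the paper: for part~\ref{p2:contractive} both apply the Banach fixed point theorem to the time-$T$ flow map and then use time-invariance plus uniqueness of solutions to upgrade the fixed point to an equilibrium (the paper argues by contradiction via periodicity of the orbit, you argue directly via the semigroup commuting identity---both valid and equivalent in spirit). Part~\ref{p3:contractive} is identical to the paper's proof: differentiate $F$ along the trajectory via the chain rule and apply Lemma~\ref{thm:coppel}.
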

\begin{proof}
We prove statement~\ref{p2:contractive}. Recalling that the system is contractive, we have $\norm{\phi(t,t_0,x_0)-\phi(t,t_0,x_0')}\leq e^{-c(t-t_0)}\norm{x_0-x_0'}$ for any $t\geq t_0$, $x_0,x_0'\in\mcX$. Fix any $t> t_0$. Since $e^{-c(t-t_0)}<1$, we can use the Banach fixed point theorem to conclude that there exists a unique fixed point $x^*$ such that $\phi(t,t_0,x^*)=x^*$, which implies that $x^*$ is either an equilibrium point or is a point which is revisited by the trajectory at time $t$. 
By contradiction, if we assume the latter, then any point $y^*$ at time $t_0\leq t'\leq t$ will be revisited at time $t+(t'-t_0)$ (since there is uniqueness of solutions from~\eqref{eq:ctr-1} and $F$ is time invariant) and thus $y^*$ is also
a fixed point of $\phi(t,t_0,\cdot)$, which violates the uniqueness of $x^*$ as a fixed point. Then, $x^*$ must be the unique equilibrium of $F$. We just proved statement~\ref{p2:contractive}.

Finally, to prove statement~\ref{p3:contractive}, observe that using the chain rule on Banach spaces~\cite[Theorem 2.4.3]{RA-JEM-TSR:88}, $\frac{d}{d t}   F(\phi(t,0,x_0) )
    = DF(\phi(t,t_0,x_0) ) 
    \frac{d}{d t}   \phi(t,t_0,x_0)=DF(\phi(t,t_0,x_0)) F(\phi(t,t_0,x_0))$, i.e., $F(\phi(t,t_0,x_0))\in\mcX$ satisfies a linear time-varying differential equation on Banach spaces.
 Now, using
  Lemma~\ref{thm:coppel},
\begin{equation}
  \label{eq:bound-Coppel}
  \left\|F(\phi(t,t_0,x_0))\right\|\leq 
   \left\|F(x_0)\right\|e^{\int_{t_0}^t\mu\big( \jac{F}(\phi(\tau,t_0,x_0)) \big) d\tau}
      \leq e^{-c(t-t_0)} \left\|F(x_0)\right\|,
\end{equation}
where we used $\mu(\jac F(x))\le -c$, for every  $x\in\mcX$. 
\end{proof}

Assume now that $\mcX$ is also a Hilbert space (over the field of real
numbers) equipped with some inner product
$\langle\cdot,\cdot\rangle$. Then, a weaker and simpler sufficient
condition for contractivity than the one in Theorem~\ref{theorem_princ2}
can be obtained.

\begin{theorem}[Contraction on Hilbert spaces]
\label{integral-contractivity}
Consider the dynamical system~\eqref{eq:dyn_main} on the Hilbert space
$(\mcX,\inpr{\cdot}{\cdot})$.
\begin{enumerate}
\item\label{it:1-ic}If the following
\emph{integral contractivity condition} holds
\begin{equation}
  \label{eq:int-cont-cond}
  \langle x-y,F(t, x)-F(t, y)\rangle\leq -c(t) \norm{x-y}^2
\end{equation}
for some continuous function $c(t)>0$, $t\geq 0$, and any $x,y\in\mcX$, then system~\eqref{eq:dyn_main}  is
contractive.
\item\label{it:2-ic} If $F$ is continuously Fr\'{e}chet differentiable,
and $\mu(DF(t,x))\leq -c(t)$ for any $x\in\mcX$, and some continuous function $c(t)>0$, $t\geq 0$, 
then condition~\eqref{eq:int-cont-cond} holds.
\end{enumerate}
\end{theorem}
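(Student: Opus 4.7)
My plan for statement (i) is to work with the squared norm of the difference between two trajectories. Setting $u(t) := \phi(t,t_0,x_0) - \phi(t,t_0,y_0)$ and $V(t) := \|u(t)\|^2 = \langle u(t), u(t)\rangle$, since each trajectory is differentiable in $t$ and the inner product is a continuous bilinear form, $V$ is differentiable with
\[
  \dot{V}(t) = 2\langle u(t),\, F(t,\phi(t,t_0,x_0)) - F(t,\phi(t,t_0,y_0))\rangle.
\]
Applying the integral contractivity condition~\eqref{eq:int-cont-cond} pointwise in $t$ gives $\dot{V}(t) \leq -2c(t) V(t)$; the scalar Gr\"onwall inequality then yields $V(t) \leq e^{-2\int_{t_0}^t c(s)\,ds} V(t_0)$, and taking square roots delivers the required contractivity estimate.

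For statement (ii), the strategy is to reduce the pointwise bound $\mu(DF(t,\cdot))\leq -c(t)$ to an integrated inequality along the line segment joining $x$ and $y$. The first ingredient is the Hilbert-space identity
\[
  \mu(A) \;=\; \sup_{v \neq 0}\frac{\langle v, A v\rangle}{\|v\|^2}\qquad \text{for } A \in B(\mcX),
\]
which I plan to derive by expanding $\|(I+hA)v\|^2 = \|v\|^2 + 2h\langle v, Av\rangle + h^2\|Av\|^2$, taking the supremum over $\|v\|=1$, and passing to the limit $h \to 0^+$ in Definition~\ref{def:oper-ms}. Consequently the hypothesis $\mu(DF(t,z))\leq -c(t)$ for every $z \in \mcX$ is equivalent to $\langle v, DF(t,z) v\rangle \leq -c(t)\|v\|^2$ for every $v, z\in\mcX$. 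The second ingredient is the fundamental theorem of calculus for Fr\'echet-differentiable maps,
\[
  F(t,x) - F(t,y) = \int_0^1 DF\bigl(t,\, y + s(x-y)\bigr)(x-y)\,ds.
\]
Pairing both sides with $x-y$, interchanging inner product and Bochner integral (justified by continuity of the integrand in $s$), and applying the pointwise bound with $v = x-y$ and $z = y+s(x-y)$ recovers~\eqref{eq:int-cont-cond} with the same rate $c(t)$.

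The main technical nuisance will be justifying the Hilbert-space identity for $\mu(A)$, since Definition~\ref{def:oper-ms} is phrased through the operator norm rather than through the inner product. I anticipate handling this by proving the two inequalities separately: the lower bound follows by restricting the supremum defining $\|I+hA\|_{\op}$ to a single unit vector $v$ before taking $h\to 0^+$, while the upper bound follows from the estimate $\|I+hA\|_{\op}^2 \leq 1 + 2h\sup_{v\neq 0}\langle v, Av\rangle/\|v\|^2 + h^2\|A\|_{\op}^2$ combined with $\sqrt{1+t}\leq 1 + t/2$. Once this identity is secured, the remaining steps are routine applications of Gr\"onwall's inequality and of the mean value formula for Fr\'echet derivatives in Banach spaces.
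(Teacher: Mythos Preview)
Your proof of statement~\ref{it:1-ic} is identical to the paper's: both differentiate $\|e(t)\|^2$, apply~\eqref{eq:int-cont-cond}, and invoke Gr\"onwall. For statement~\ref{it:2-ic} the overall strategy is also the same---combine the inequality $\langle v,Av\rangle\leq\mu(A)\|v\|^2$ with the fundamental theorem of calculus for Fr\'echet derivatives---but you handle the integral differently. The paper applies the quadratic-form bound to the \emph{integrated} operator $\int_0^1 DF(t,s_\lambda)\,d\lambda$ and then pushes $\mu$ inside the integral via subadditivity of the operator measure, which requires a Moore--Osgood double-limit argument on the Riemann sums. You instead pull the inner product inside the Bochner integral first (a one-line consequence of bounded linearity of $\langle x-y,\cdot\rangle$) and apply the pointwise bound under the integral sign; this is more elementary and sidesteps the limit-interchange technicality entirely. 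Note also that you only need the lower bound $\mu(A)\geq\sup_{v\neq0}\langle v,Av\rangle/\|v\|^2$ for the argument to go through---the reverse inequality you plan to prove is correct but unnecessary here.
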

\begin{proof}
Consider~\eqref{eq:int-cont-cond} and define $e:=x-y$. Then,
$\dot{e}=F(t,x)-F(t,y)$ and we obtain 
$\langle e,\dot{e}\rangle\leq
-c\norm{e}^2\Rightarrow \frac{d\norm{e}^2}{dt}=\frac{d\langle
  e,e\rangle}{dt}=(\langle e,\dot{e}\rangle+\langle \dot{e},
e\rangle)\leq -2c\norm{e}^2$,
%
  where we used the fact that the inner product is a bilinear function. Solving this differential inequality using the 
 Gr\"{o}nwall's Lemma, 
we obtain $\norm{e(t)}\leq e^{-\int_{t_0}^tc(s)ds}\norm{e(t_0)}$ for any $t\geq t_0$, establishing that the system is contractive and proving statement~\ref{it:1-ic}.

Now, we prove statement~\ref{it:2-ic} of the theorem. First, let $A\in\BX$, then
\begin{equation}
\label{eq:lower_bound}
\begin{split}
\mu(A)&=\lim_{h\to 0^+}\frac{\sup_{\substack{x,y\neq 0\\x,y\in \mcX}}\frac{|\langle x,(I+hA)y\rangle|}{\norm{x}\norm{y}}-1}{h}\\
&\geq \lim_{h\to 0^+}\frac{\frac{\langle x,(I+hA)x\rangle}{\langle x,x\rangle}-1}{h}=\frac{\langle x,Ax\rangle}{\langle x,x\rangle}
\end{split}
\end{equation}
for any $x\in\mcX$ and $x\neq 0$; the first equality follows from~\cite[p.~187]{JM:14}. However, note that $\mu(A)\langle x,x\rangle\geq\langle x,Ax\rangle$ does hold for any $x\in\mcX$. Now, consider $F$ to be continuously Fr\'{e}chet differentiable, and consider any $x,y\in\mcX$. From the fundamental theorem of calculus for Fr\'{e}chet derivatives~\cite[Proposition~2.4.7]{RA-JEM-TSR:88}, 
\begin{equation}
\label{oin1}
F(t,x)-F(t,y)=\left(\int_0^1 DF(t,s_{\lambda}(x,y))d\lambda\right)(x-y)
\end{equation}
for a fixed $t$, with $s_{\lambda}(x,y):=x+\lambda(x-y)$, and where the integral is the Riemann integral on Banach
spaces~\cite[Chapter~7]{KL:85}\cite[Section~1.3]{GEL-VL:72} ($B(\mcX)$
is a Banach space with the operator norm). Then, 
$\langle x-y,F(t,x)-F(t,y)\rangle=\langle x-y,\int_0^1 DF(s_{\lambda}(x,y))d\lambda (x-y)\rangle$, and using~\eqref{eq:lower_bound},
\begin{align*}
\langle x-y,F(t,x)-F(t,y)\rangle&\leq\mu\left(\int_0^1 DF(t,s_{\lambda}(x,y))d\lambda\right)\norm{x-y}^2\\
&\leq\int_0^1\mu\left(DF(t,s_{\lambda}(x,y)\right)d\lambda\norm{x-y}^2\leq -c(t)\norm{x-y}^2.
\end{align*}
We now justify the second inequality above. Let $S_n$ be the $n$th partial sum of a Riemann integral $\mathcal{I}$, and set $q_n(h)=\frac{\norm{I+hS_n}_{\op}-1}{h}$. Observe that (i) $\lim_{h\to0^+}q_n(h)=\mu(S_n)$ for each $n$; (ii) $\lim_{n\to\infty}q_n(h)=\frac{\norm{I+h\mathcal{I}}_{\op}-1}{h}$ uniformly over $h$. Then, the Moore-Osgood Theorem implies 
$\lim_{n\to\infty}\mu(S_n)=\mu(\mathcal{I})$. This and the sub-additive property of operator measures~\cite[Problem~5.4.1]{GEL-VL:72} prove the second inequality above.
\end{proof}

\begin{remark}[About contraction on Hilbert spaces]
    \mbox{}\null\hspace{-1ex}\null
  \begin{enumerate}
\item The integral contractivity condition does not require the vector
  field $F$ to be Fr\'{e}chet differentiable.
\item When $F$ is continuously Fr\'{e}chet differentiable, $DF(t,\cdot)$ 
  is no longer required to be 
  bounded as in
  Theorem~\ref{theorem_princ2} (which follows
  from~\cite[Lemma~5.4.2]{GEL-VL:72}). Then, 
  statement~\ref{it:2-ic} of Theorem~\ref{integral-contractivity}
  provides a more relaxed condition for contraction using operator measures when the dynamical system
  is on Hilbert spaces.
\item The integral contractivity condition generalizes a known sufficient
  condition of contractivity (e.g.,~\cite[Lemma 2.1]{PCV-SJ-FB:19r}) that
  has been established in the Euclidean space and is related to the
  so-called QUAD condition for dynamical
  systems~\cite{PD-MdB-GR:11,AP-AP-NVDW-HN:04}.
\end{enumerate}
\end{remark}

\begin{remark}[Uniqueness of solutions]
For any system satisfying the assumptions of Theorem~\ref{theorem_princ2} or Theorem~\ref{integral-contractivity}, the existence of a solution implies its uniqueness.
\end{remark}

\section{Semi- and partial contraction on Hilbert spaces}
\label{sec:part-contr-HS}

In this section, let $(\mcX, \inprX{\cdot}{\cdot})$ and
$(\Y,\inprY{\cdot}{\cdot})$ be Hilbert spaces and let $\T:\mcX\to\Y$ be
linear, surjective and bounded.\footnote{In this case, the operator norm of
  $\T$ is $\norm{\T}_{\op}=\sup_{\substack{x\in\mcX\\x\neq 0}}\frac{\norm{\T
      x}_\Y}{\norm{x}_\mcX}$.}  A classic example is $\mcX=\R^n$, $\Y=\R^m$
with $m\leq n$, and $\T\in\R^{m\times n}$ being a full rank matrix.

Define the bilinear function
$\semin{\cdot}{\cdot}_{\T}:\mcX\times\mcX\to\R$ by
$\semin{x_1}{x_2}_{\T}=\inprY{\T x_1}{\T x_2}$, and define the seminorm
$\norm{x_1}_{\T}:=\sqrt{\semin{x_1}{x_1}_{\T}}$.  Let $\T^{\dagger}$ be the
Moore-Penrose (generalized) inverse of $\T$, which is a well-defined
operator since $\T$ is surjective (and trivially has closed
range)~\cite[Corollary~11.1.1]{GW-YW-SQ:18}.

\begin{definition}[Partial and semi-contraction]
The system~\eqref{eq:dyn_main} is 
\begin{enumerate}
\item \emph{partially contractive} with respect to~$\norm{\cdot}_{\T}$ if
  there exists a continuous function $c(t)>0$, $t\geq 0$, such that, for any $x_0\in\mcX$ and $t\geq t_0$,
  \begin{equation}\label{eq:part-contr}
    \norm{\phi(t,t_0,x_0)}_{\T}\leq e^{-\int_{t_0}^tc(s)ds} \norm{x_0}_{\T};
  \end{equation}
  
\item \emph{semi-contractive} with respect to~$\norm{\cdot}_{\T}$ if there
  exists a continuous function $c(t)>0$, $t\geq 0$, 
  such that, for any $x_0,y_0\in\mcX$ and $t\geq t_0$,
  \begin{equation}\label{eq:semi-contr}
  \norm{\phi(t,t_0,x_0)-\phi(t,t_0,y_0)}_{\T}\leq e^{-\int_{t_0}^tc(s)ds}
  \norm{x_0-y_0}_{\T}.
  \end{equation}
\end{enumerate}
\end{definition}

We remark that the concept of partial and semi-contraction have not been formalized before on Hilbert  spaces. Indeed, in the Euclidean space (with the usual inner product) with time-invariant contraction rates, our formalization becomes the classic cases studied in~\cite{SJ-PCV-FB:19q} and~\cite{QCP-JJS:07} respectively, where $\T$ becomes an $n\times{m}$, $n<m$, full-row rank matrix.

We introduce the following useful concepts. 

\begin{definition}[$\T$-seminorms and $\T$-operator semi-measures]
\label{def:restricted_norm_op}
Consider a linear, surjective, bounded operator $\T:\mcX\to\Y$ and let
$A\in\BX$. The associated \emph{$\T$-seminorm} of $A$ as
$$
\norm{A}_{\T,\op}=\sup_{\substack{x\in\ker(\T)^\perp\\x\in\mcX, x\neq 0}}\frac{\norm{Ax}_{\T}}{\norm{x}_{\T}}
$$
and the \emph{$\T$-operator semi-measure} of $A$ as 
$$
\mu_{\T} (A)=\lim_{h\to 0^+}\frac{\norm{I+hA}_{\T,\op}-1}{h}.
$$
\end{definition}

The definition of $\T$-operator semi-measure is well-posed, since the existence of directional derivatives follows from the subaddivity property of the seminorm $\norm{\cdot}_{\T,\op}$ (with the argument in $\BX$) as shown in~\cite[Example~7.7.]{KL:85}.

\begin{theorem}[Partial contraction on Hilbert spaces]
  \label{theorem_part_conc}
  Let $(\mcX, \inprX{\cdot}{\cdot})$ and $(\Y,\inprY{\cdot}{\cdot})$ be
  Hilbert spaces and $\T:\mcX\to\Y$ be linear, surjective and bounded.
  Consider the dynamical system~\eqref{eq:dyn_main} on
  $(\mcX,\inprX{\cdot}{\cdot})$ with $F(t,\cdot)$ continuously Fr\'{e}chet
  differentiable for each $t$. 
  Assume that
  \begin{enumerate}
  \item\label{ass:1} there exists a continuous function $c(t)>0$ such that $\mu_{\T} (DF(t,x))=\mu(\T DF(t,x)
    \T^{\dagger})\leq -c(t)$ for every $(t,x)\in\R_{\geq 0}\times\mcX$ (with
    the operator measure $\mu$ associated to~$\norm{\cdot}_Y$),
  \item\label{ass:2} the subspace $\ker(\T)$ is positively invariant.
  \end{enumerate}
  Then the system~\eqref{eq:dyn_main} is partially contractive with respect to~$\norm{\cdot}_{\T}$.
\end{theorem}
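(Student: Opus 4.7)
My plan is to reduce the partial contraction bound~\eqref{eq:part-contr} to a scalar differential inequality for $V(t) := \norm{\phi(t,t_0,x_0)}_{\T}^2 = \inprY{\T\phi(t,t_0,x_0)}{\T\phi(t,t_0,x_0)}$ and then close with Gr\"{o}nwall's Lemma, paralleling the closing argument in the proof of Theorem~\ref{integral-contractivity}. Concretely, the target is $\dot V(t) \le -2c(t) V(t)$ for $t\ge t_0$, after which taking square roots gives~\eqref{eq:part-contr}.

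The first key step is to translate Assumption~\ref{ass:2} into an algebraic condition on $F$. Since $\ker(\T)$ is a closed subspace (as the preimage of $\{0\}$ under the bounded operator $\T$) that is invariant under the flow, for any $z\in\ker(\T)$ we have $\phi(t,t_0,z)\in\ker(\T)$ for all $t\ge t_0$. Writing $\phi(t,t_0,z)-z = \int_{t_0}^{t} F(s,\phi(s,t_0,z))\,ds$, dividing by $t-t_0$ and letting $t\downarrow t_0$, closedness of $\ker(\T)$ forces $F(t,z)\in\ker(\T)$; equivalently, $\T F(t,z)=0$ for every $z\in\ker(\T)$ and every $t\ge 0$.

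The second key step is an integral representation of $\T F(t,\phi)$. Let $P := \T^{\dagger}\T$ be the orthogonal projection onto $\ker(\T)^{\perp}$, and note that $\T\T^{\dagger}=I_{\Y}$ since $\T$ is surjective. Applying the Fr\'{e}chet fundamental theorem of calculus (as in the proof of Theorem~\ref{integral-contractivity}) to $F(t,\cdot)$ between $\phi$ and $(I-P)\phi\in\ker(\T)$, composing on the left with the bounded linear map $\T$ (pulling it inside the Banach-valued Riemann integral), dropping $\T F(t,(I-P)\phi)=0$ by the previous step, and substituting $P\phi = \T^{\dagger}(\T\phi)$ yields the identity
\[
\T F(t,\phi(t,t_0,x_0)) \;=\; \Big(\int_0^1 \T\, DF(t,\xi_\lambda)\, \T^{\dagger}\, d\lambda\Big)\,\T\phi(t,t_0,x_0),
\]
where $\xi_\lambda := (I-P)\phi + \lambda P\phi$.

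With this identity, I would differentiate $V$ to get $\dot V = 2\inprY{\T\phi}{\T F(t,\phi)}$, substitute, and then apply, in order: (i) the Hilbert-space bound $\inprY{y}{By}\le \mu(B)\norm{y}_\Y^2$ for every $B\in\BY$, derived exactly as in~\eqref{eq:lower_bound}, with $B$ the integrated operator in the identity above; (ii) subadditivity of $\mu$ under Riemann integrals of $\BY$-valued maps, justified by the Moore-Osgood argument at the end of the proof of Theorem~\ref{integral-contractivity}; and (iii) Assumption~\ref{ass:1}, which bounds $\mu$ of the integrand by $-c(t)$ pointwise in $\lambda$. These give $\dot V(t)\le -2c(t)V(t)$, and Gr\"{o}nwall delivers~\eqref{eq:part-contr}. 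The principal obstacle is the displayed identity: establishing $\T F(t,(I-P)\phi)=0$ via Assumption~\ref{ass:2} and commuting $\T^{\dagger}$ past $DF(t,\xi_\lambda)$ inside the Banach-valued integral are the delicate moves, though both reduce to surjectivity of $\T$ and standard Moore-Penrose properties.
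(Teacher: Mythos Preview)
Your proof is correct and uses the same ingredients as the paper's (the consequence $\T F(t,z)=0$ for $z\in\ker(\T)$ from positive invariance, the identities $\T\T^\dagger=I_{\Y}$ and $P=\T^\dagger\T$, the mean-value integral, and the bound~\eqref{eq:lower_bound}), but organizes them differently. The paper introduces the auxiliary variable $y=\T x$ and shows it satisfies a time-varying system on $\Y$, namely $\dot y=\T F(t,\T^\dagger y+Ux(t))$ with $U=I-\T^\dagger\T$; it then observes that the Fr\'{e}chet derivative in $y$ is $\T DF(t,\cdot)\T^\dagger$, applies Theorem~\ref{integral-contractivity} to conclude this $\Y$-system is contracting, and notes that both $y\equiv0$ and $y(t)=\T\phi(t,t_0,x_0)$ are solutions of it, whence contraction of the pair gives~\eqref{eq:part-contr}. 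Your route instead inlines what Theorem~\ref{integral-contractivity} does, working directly with $V=\norm{\T\phi}_{\Y}^2$. The paper's framing buys a clean conceptual picture---partial contraction is ordinary contraction of the projected dynamics, with $0$ the attracting trajectory---while yours is more self-contained and sidesteps tracking two solutions of a non-autonomous auxiliary system. One small remark: what you flag as ``commuting $\T^{\dagger}$ past $DF(t,\xi_\lambda)$'' is really just right-composition by the fixed bounded operator $\T^{\dagger}$, which commutes with the Banach-valued Riemann integral by linearity and continuity; there is nothing delicate there.
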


\begin{proof}
We first observe that
\begin{equation}
\label{eq:aux_res}
\begin{aligned}
\norm{A}_{\T,\op}&=\sup_{\substack{x\in\ker(\T)^\perp\\x\in\mcX,x\neq 0}}\frac{\norm{\T A\T^{\dagger}\T x}_{Y}}{\norm{\T x}_{Y}}\\
&=\sup_{\substack{y\neq 0\\y\in\Y}}\frac{\norm{\T A\T^{\dagger}y}_{Y}}{\norm{y}_{Y}}=\norm{\T A\T^{\dagger}}_{Y,\op}
\end{aligned}
\end{equation}
where the first equality follows from the fact that $\T^{\dagger}\T$ is a
projection operator on
$\ker(\T)^\perp$~\cite[Theorem~3.5.8]{TH-RW:15}. Then, using the fact that
$\T\T^{\dagger}=I$, which follows from $\T$ being
surjective~\cite[Definition~11.1.3]{GW-YW-SQ:18}, it follows that
\begin{equation}
\label{eq:aux_res1}
\begin{aligned}
\mu_{\T} (A)&=\lim_{h\to 0^+}\frac{\norm{\T(I+hA)\T^{\dagger}}_{Y,\op}-1}{h}\\
&=\lim_{h\to 0^+}\frac{\norm{I+h\T A\T^{\dagger}}_{Y,\op}-1}{h}=\mu(\T A\T^{\dagger}).
\end{aligned}
\end{equation}
Now, set 
$y=\T x$, with $x$ being the state of the system, and 
by the chain rule, $y$ is differentiable with respect to time and $\dot{y}=\T\dot{x}=\T F(t,x)$. Now, since $\T$ is a bounded linear operator, 
$\ker(\T)$ is a closed linear subspace of $\mcX$, and so, we have the
following decomposition
$\mcX=\ker(\T)\oplus\ker(\T)^\perp$~\cite[Theorem~1,
Section~3.4]{DGL:69}. Set $U:=I-\T^\dagger\T$. Then, for any
trajectory $t\mapsto x(t)$, we have $x(t)=\T^\dagger\T x(t)+Ux(t)=\T^\dagger y(t)+Ux(t)$, with $\T^\dagger y(t)\in \ker(\T)^\perp$ and $Ux(t)\in\ker(\T)$. Then,
\begin{equation}
\label{eq:auxy}
\dot{y}=\T F(t,\T^\dagger y+Ux(t))
\end{equation}
is a time-varying dynamical system on the Hilbert space $(\Y,\inprY{\cdot}{\cdot})$, and so the Fr\'{e}chet derivative, using the chain rule,
of the right-hand side of~\eqref{eq:auxy} 
(with respect to $y$) is $\T DF(t,\T^\dagger y+Ux(t))\T^\dagger$.
Then, from~\eqref{eq:aux_res1}, it easily follows from Theorem~\ref{integral-contractivity} that: 
if $\mu(\T DF(t,x)\T^{\dagger})\leq -c(t)$ as in the theorem statement and assumption~\ref{ass:1}, then the dynamical system~\eqref{eq:auxy} is contracting with respect to the norm $\norm{\cdot}_{Y}$. 
Now, we make the following observation: let us consider a solution such that initial condition $x_o\in\ker(\T)$ at time $t_0$ implies $\phi(t,t_0,x_o)\in\ker(\T)$ and so $\T\phi(t,t_0,x_o)=0$ for any $t\geq t_0$ (by assumption~\ref{ass:2}). Differentiating, we obtain $\T F(t,\phi(t,t_0,x_o))=0$, which let us conclude that if $u\in\ker(\T)$, then $\T F(t,u)=0$.
In conclusion, there are two solutions known for the
system~\eqref{eq:auxy}: $y=0$ (because if $y=0$, then $\dot{y}=\T
F(t,Ux)=0$ follows from $Ux\in\ker(\T)$ as we just showed) and
$t\mapsto y(t)=\T x(t)$, and these two solutions should exponentially converge to each other due to
contraction. Then, equation~\eqref{eq:part-contr} follows from $\norm{y}_{Y}=\norm{\T x}_{Y}=\norm{x}_{\T}$.
\end{proof}


\begin{theorem}[Integral partial contractivity condition]
\label{partial-integral-contractivity-subs}
Consider the dynamical system~\eqref{eq:dyn_main} on the Hilbert space
$(\mcX,\inprX{\cdot}{\cdot})$ 
and the linear,
surjective, bounded operator $\T:\mcX\to\Y$. If the following
\emph{integral partial contractivity condition} holds
\begin{equation}
\label{eq:part-int-cont-cond-subs}
\semin{x}{F(t,x)}_{\T}\leq -c(t)\norm{x}^2_{\T}
\end{equation}
for some continuous function $c(t)>0$, $t\geq 0$, and any $x\in\mcX$, then
the system is partially contractive with respect to $\norm{\cdot}_{\T}$, and, as a consequence,
$\ker(\T)$ is a positively invariant subspace.
\end{theorem}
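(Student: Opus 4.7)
The plan is to mirror the argument for part~\ref{it:1-ic} of Theorem~\ref{integral-contractivity}, with the inner product $\inprX{\cdot}{\cdot}$ replaced throughout by the symmetric bilinear form $\semin{\cdot}{\cdot}_{\T}$. The key observation is that $\semin{\cdot}{\cdot}_{\T}$ is the pullback of $\inprY{\cdot}{\cdot}$ by the bounded linear operator $\T$, hence is itself a continuous symmetric bilinear form on $\mcX\times\mcX$; so the standard product-rule calculation that yields $\frac{d}{dt}\inpr{x(t)}{x(t)} = 2\inpr{x(t)}{\dot x(t)}$ for an inner product goes through verbatim for $\semin{\cdot}{\cdot}_{\T}$.

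Concretely, given a solution $t\mapsto \phi(t,t_0,x_0)$ of~\eqref{eq:dyn_main}, I would set $V(t) := \norm{\phi(t,t_0,x_0)}_{\T}^{2} = \semin{\phi(t,t_0,x_0)}{\phi(t,t_0,x_0)}_{\T}$. Since $\T$ is bounded and linear, $t\mapsto \T\phi(t,t_0,x_0)$ is differentiable in $\Y$ with derivative $\T F(t,\phi(t,t_0,x_0))$, and the product rule for the inner product on $\Y$ then gives
\[
\dot{V}(t) = 2\semin{\phi(t,t_0,x_0)}{F(t,\phi(t,t_0,x_0))}_{\T} \le -2c(t)\,V(t),
\]
where the inequality is exactly the hypothesis~\eqref{eq:part-int-cont-cond-subs}. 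Applying Gr\"onwall's lemma to this scalar differential inequality yields $V(t) \le e^{-2\int_{t_0}^{t}c(s)\,ds}\,V(t_0)$, and taking square roots gives the partial contraction estimate~\eqref{eq:part-contr}.

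For the positive invariance of $\ker(\T)$, if $x_0\in\ker(\T)$ then $\norm{x_0}_{\T} = \norm{\T x_0}_{\Y} = 0$, and the bound just established forces $\norm{\phi(t,t_0,x_0)}_{\T}=0$ for all $t\ge t_0$, i.e., $\phi(t,t_0,x_0)\in\ker(\T)$. I do not expect any real obstacle here: the only subtlety is justifying the product-rule differentiation of the bilinear form along trajectories, which is immediate from boundedness and linearity of $\T$. Notably, as in Theorem~\ref{integral-contractivity}~\ref{it:1-ic}, no Fr\'{e}chet differentiability of $F$ is required for this result.
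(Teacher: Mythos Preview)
Your proposal is correct and follows essentially the same approach as the paper: the paper's proof is explicitly deferred to the first part of Theorem~\ref{partial-integral-contractivity}, which in turn mimics the proof of Theorem~\ref{integral-contractivity}~\ref{it:1-ic} by differentiating $\norm{\cdot}_{\T}^{2}$ along trajectories via the bilinearity of $\semin{\cdot}{\cdot}_{\T}$ and applying Gr\"onwall, exactly as you outline. Your derivation of the positive invariance of $\ker(\T)$ from the resulting estimate is also the intended immediate consequence.
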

\begin{proof}
The proof is very similar to the first part of Theorem~\ref{partial-integral-contractivity} for proving its respective integral condition, and thus is omitted.
\end{proof}

We now introduce the counterpart of Theorem~\ref{integral-contractivity} for semi-contractive systems.

\begin{theorem}[Integral semi-contractivity condition]
\label{partial-integral-contractivity}
Consider the dynamical system~\eqref{eq:dyn_main} on the Hilbert space
$(\mcX,\inprX{\cdot}{\cdot})$ 
and the linear,
surjective, bounded operator $\T:\mcX\to\Y$.
\begin{enumerate}
\item\label{it:1-pic} If the following
\emph{integral semi-contractivity condition} holds
\begin{equation}
\label{eq:part-int-cont-cond}
\semin{x-y}{F(t,x)-F(t,y)}_{\T}\leq -c(t) \norm{x-y}^2_{\T}
\end{equation}
for some continuous function $c(t)>0$, $t\geq 0$, 
and any $x,y\in\mcX$, then the
system~\eqref{eq:dyn_main} is semi-contractive with respect to $\norm{\cdot}_{\T}$.
  \item\label{it:2-pic}If $F$ is continuously Fr\'{e}chet differentiable, $\mu(\T
    DF(t,x)\T^{\dagger})\leq -c(t)$, for some continuous function $c(t)>0$, $t\geq 0$, and $\ker(\T)$ is an invariant
    subspace for $DF(t,x)$, for every $x\in\mcX$ and $t\geq 0$, then
    condition~\eqref{eq:part-int-cont-cond} holds.
  \end{enumerate}
\end{theorem}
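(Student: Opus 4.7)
The plan is to prove the two parts in sequence, both of them essentially mirroring the structure of Theorem~\ref{integral-contractivity} but with the seminorm $\norm{\cdot}_{\T}$ in place of $\norm{\cdot}$ and with the invariance of $\ker(\T)$ under $DF$ playing the role that would otherwise require $DF$ itself to be controlled by $\mu$.

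For statement~\ref{it:1-pic}, I would follow the argument of statement~\ref{it:1-ic} of Theorem~\ref{integral-contractivity} almost verbatim. Fix two trajectories $x(t)=\phi(t,t_0,x_0)$ and $y(t)=\phi(t,t_0,y_0)$ and set $e(t):=x(t)-y(t)$, so that $\dot{e}=F(t,x)-F(t,y)$. Since $\semin{\cdot}{\cdot}_{\T}$ is bilinear, we get
$$\frac{d}{dt}\norm{e}_{\T}^2=\frac{d}{dt}\semin{e}{e}_{\T}=\semin{e}{\dot{e}}_{\T}+\semin{\dot{e}}{e}_{\T}=2\semin{e}{F(t,x)-F(t,y)}_{\T}\leq -2c(t)\norm{e}_{\T}^2,$$
and Grönwall's lemma yields $\norm{e(t)}_{\T}\leq e^{-\int_{t_0}^t c(s)ds}\norm{e(t_0)}_{\T}$, which is precisely~\eqref{eq:semi-contr}.

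For statement~\ref{it:2-pic}, I would start from the fundamental theorem of calculus for Fréchet derivatives, exactly as in~\eqref{oin1}, to write $F(t,x)-F(t,y)=\bigl(\int_0^1 DF(t,s_{\lambda}(x,y))\,d\lambda\bigr)(x-y)$. The new ingredient is the invariance hypothesis: since $\ker(\T)$ is invariant under $DF(t,z)$ for every $z$, we have $\T DF(t,z)v=0$ for all $v\in\ker(\T)$. Decomposing $x-y=\T^{\dagger}\T(x-y)+(I-\T^{\dagger}\T)(x-y)$ with $(I-\T^{\dagger}\T)(x-y)\in\ker(\T)$, this gives the pointwise identity $\T DF(t,s_{\lambda})(x-y)=\T DF(t,s_{\lambda})\T^{\dagger}\T(x-y)$. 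Applying $\T$ to the fundamental-theorem-of-calculus identity, pulling $\T$ inside the integral by boundedness, and then using the bound $\inprY{z}{Bz}\leq\mu(B)\norm{z}_Y^2$ from~\eqref{eq:lower_bound} with $B=\T DF(t,s_{\lambda})\T^{\dagger}$ and $z=\T(x-y)$, I obtain
$$\semin{x-y}{F(t,x)-F(t,y)}_{\T}=\int_0^1 \inprY{\T(x-y)}{\T DF(t,s_{\lambda})\T^{\dagger}\T(x-y)}\,d\lambda\leq \int_0^1 \mu\bigl(\T DF(t,s_{\lambda})\T^{\dagger}\bigr)\,d\lambda\,\norm{x-y}_{\T}^2\leq -c(t)\norm{x-y}_{\T}^2,$$
which is the desired integral semi-contractivity condition~\eqref{eq:part-int-cont-cond}. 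The subadditivity and Moore–Osgood arguments justifying moving $\mu$ inside the integral are the same as in Theorem~\ref{integral-contractivity}.

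The main technical point, and the only step beyond the argument already given for Theorem~\ref{integral-contractivity}, is the use of the invariance of $\ker(\T)$ under $DF$ to produce the identity $\T DF(t,s_{\lambda})(x-y)=\T DF(t,s_{\lambda})\T^{\dagger}\T(x-y)$; without this identity the hypothesis is about $\T DF\T^{\dagger}$ while the fundamental-theorem-of-calculus expression naturally yields $\T DF(x-y)$, and the kernel component of $x-y$ would obstruct the bound. Everything else is a direct transcription of the Hilbert-space contraction proof into the seminorm setting.
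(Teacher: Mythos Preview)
Your proposal is correct and follows essentially the same approach as the paper's proof. The only minor difference is an ordering detail in part~\ref{it:2-pic}: the paper first forms the integrated operator $B(t,x,y)=\int_0^1 DF(t,y+\lambda(x-y))\,d\lambda$, applies the kernel-invariance argument to $B$, bounds $\inprY{\T(x-y)}{\T B\T^{\dagger}\T(x-y)}$ by $\mu(\T B\T^{\dagger})\norm{x-y}_{\T}^2$, and then invokes the Moore--Osgood/subadditivity step to push $\mu$ inside the integral; you instead apply the invariance identity and the bound $\inprY{z}{Bz}\leq\mu(B)\norm{z}_Y^2$ pointwise in $\lambda$ and integrate afterward, which makes the Moore--Osgood step you mention at the end unnecessary in your route.
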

\begin{proof}
First, consider the inequality~\eqref{eq:part-int-cont-cond} and define $e:=x-y$ and follow the same procedure as in the proof of Theorem~\ref{integral-contractivity} to show that 
$\norm{e(t)}_{\T}\leq e^{-\int_{t_0}^tc(s)ds}\norm{e(t_0)}_{\T}$ for any $t\geq t_0$, thus establishing the system is 
semi-contractive and proving statement~\ref{it:1-pic}.

Now, we prove statement~\ref{it:2-pic} of the theorem. Consider $F$ to be continuously Fr\'{e}chet differentiable, and consider any $x,y\in\mcX$. 
Then, 
using the fundamental theorem of calculus for Fr\'{e}chet derivatives~\cite[Proposition~2.4.7]{RA-JEM-TSR:88}, we obtain, for a fixed $t$, $F(t,x)-F(t,y)=B(t,x,y)(x-y)$ with $B(t,x,y):=\int_0^1 DF(t,y+\lambda(x-y))d\lambda$. 

Then,
\begin{align}
  \inprY{\T(x-y)}{\T(F(t,x)-F(t,y))} \nonumber &=\inprY{\T(x-y)}{\T B(t,x,y)(I-\T^{\dagger}\T+\T^{\dagger}\T)(x-y)} \nonumber \\
&=\inprY{\T(x-y)}{\T B(t,x,y)\T^{\dagger}\T(x-y)} \nonumber \\
&\leq \mu(\T B(t,x,y)\T^{\dagger})\inprY{\T(x-y)}{\T(x-y)} \nonumber \\
  &=\mu(\T B(t,x,y)\T^{\dagger})\norm{x-y}_{\T}^2.   \label{eq:aux_pic}
\end{align}
We now justify the second equality in~\eqref{eq:aux_pic}. First, the invariance assumption
implies that $DF(t,u)v\in\ker(\T)$ for any $v\in\ker(\T)$ and $u\in\mcX$, and so: $\T B(t,x,y)v=\T\int_0^1 DF(t,y+\lambda(x-y))d\lambda v=\int_0^1 \T DF(t,y+\lambda(x-y))v d\lambda= 0$. Then, we use this to obtain the second equality: $(I-\T^\dagger\T)(x-y)\in\ker(\T)$, and so $B(t,x,y)(I-\T^{\dagger}\T)(x-y)\in\ker(\T)$  
and so $\T B(t,x,y)(I-\T^{\dagger}\T)(x-y)=0$.

Now, observe that 
\begin{equation*}
\mu(\T B(t,x,y)\T^{\dagger})=\mu(\int_0^1 \T DF(t,y+\lambda(x-y)\T^{\dagger}d\lambda)\leq \int_0^1\mu(\T DF(t,y+\lambda(x-y)\T^{\dagger})d\lambda\leq -c(t),
\end{equation*}
where the first inequality is justified in the same way as in the last part of the proof of Theorem~\ref{integral-contractivity}. Then, using this relationship in~\eqref{eq:aux_pic}, we obtain $\inprY{\T(x-y)}{\T(F(t,x)-F(t,y))}\leq -c(t)\norm{x-y}_{\T}^2$, which is condition~\eqref{eq:part-int-cont-cond}.
\end{proof}

\begin{remark}[About partial and semi-contraction] \mbox{}
\begin{enumerate}
\item The integral semi- and partial contractivity conditions do not
  require $F$ to be continuously Fr\'{e}chet differentiable.

\item If $\ker(\T)$ is positively invariant for the system, then the integral condition in
  Theorem~\ref{partial-integral-contractivity} implies partial
  contractivity.

\item For continuous differentiable vector fields on Euclidean spaces, the
  semi-contraction condition in
  Theorem~\ref{partial-integral-contractivity} was first introduced
  in~\cite{SJ-PCV-FB:19q}.
\end{enumerate}
\end{remark}

\section{Application to reaction-diffusion systems}
\label{sec:examples}
Reaction-diffusion PDEs have a long history of study due to their
importance in chemistry and biology~\cite{JDM:02}. Of particular interest
are conditions under which the system does not present the phenomenon of
pattern formation, which occurs from diffusion-driven
instabilities~\cite{MA:11}. Particular instances of these systems have been
studied using analysis related to contraction~\cite{ZA-YS-MA-EDS:14,ZA-ES:13,ZA-EDS:14b}.

Consider a bounded and convex domain $\Omega\subset\R^m$ with smooth
boundary $\partial\Omega$. For any function $h:\R^m\to\R^n$, define the
vector Laplacian operator $\nabla^2$ by $\nabla^2 h=(\nabla^2
h_1,\dots,\nabla^2 h_n)^\top$ and $(\nabla^2 h(x))_i = \sum_{j=1}^{m}
\frac{\partial^2 h_i(x)}{\partial x^2_j}$.
Let $\mcL^2(\Omega)$ be the space of squared-integrable functions $h:\Omega\subset\R^m\to\R^n$
with $\int_{\Omega}h_i^2dx<\infty$, $i\in\until{n}$, endowed with inner product $\inpr{u}{v}=\int_{\Omega}u^\top v dx$ for any $u,v\in\mcL^2(\Omega)$ 
and induced norm $\norm{u}=\sqrt{\inpr{u}{u}}$. 
It is known that $(\mcL^2(\Omega),\inpr{\cdot}{\cdot})$ is a Hilbert space.


Given a continuously differentiable \emph{reaction function}
$f:\R^n\to\R^n$ and a nonnegative matrix of \emph{diffusion rates} $\Gamma
\in \real^{n\times n}$, the \emph{reaction-diffusion system with Neumann
  boundary conditions} is
\begin{equation}
  \label{ex_rdf}
  \begin{aligned}
    &\frac{\partial u}{\partial t}=f(u)+\Gamma\nabla^2u\\
    &\nabla u_i(t,x)\cdot \widehat{n}(x) = 0
    \quad\text{for all }x\in\partial\Omega,\,i\in\until{n},
\end{aligned}
\end{equation}
for $u\in \mcL^2(\Omega)$ and $(t,x)\in\R_{\geq 0}\times\Omega$, with $\widehat{n}(x)$ being the vector normal to $x\in\partial\Omega$. We refer
to~\cite{MA:11} and references therein for the system's well-posedness and
existence of classical solutions $u=u(x,t)$ such that $u(t,\cdot)$ is twice continuously differentiable for each fixed $t\in\R_{\geq 0}$, and that $t\mapsto
u(t,\cdot)$ is a twice continuously differentiable function on $\Omega$. We assume that
classical solutions exist.

A Neumann eigenvalue $\lambda\in\real$ for the Laplacian operator
$\nabla^2$ on $\Omega$ is defined by
\begin{equation*}
\begin{aligned}
&-\nabla^2u = \lambda u \\
&\nabla u_i(x)\cdot \widehat{n}(x) = 0\text{ for all }x\in\partial\Omega,\,i\in\until{n}. 
\end{aligned}
\end{equation*}
The set of Neumann eigenvalues of the Laplacian operator consists of
countably many nonnegative values with no finite accumulation
point~\cite[Section 7.1]{HA:06}: 
$0 = \lambda_1 \le \lambda_2 \le \ldots\le  \lambda_k\le \ldots$
, i.e., $\lim_{k\to\infty}\lambda_k=\infty$. 
For our $\Omega$,  
 the
eigenspace associated with the lowest eigenvalue $\lambda_1=0$ is
\begin{equation}
\label{eq:setS}
\begin{aligned}
  \mathcal{S} =\{h\in \mcL^2(\Omega) \;|\; h(x) = c \mbox{ for all } x\in\Omega\;\mbox{ and some constant vector }c\}. 
\end{aligned}
\end{equation}
The volume of $\Omega$ 
is $|\Omega| =
\int_{\Omega} dx$ 
and the spatial average of $h\in
\mcL^2(\Omega)$ over $\Omega$ is $\overline{h} =
\frac{1}{|\Omega|}\int_{\Omega} h(x) dx\in \real^n$.  Define the operator
$\Pi_{\mathcal{S}}:\mcL^2(\Omega)\to \mathcal{S}^{\perp}$ by
  \begin{align}\label{eq:pi}
    \Pi_{\mathcal{S}}(u) = u - \overline{u}.
  \end{align}
  One can easily check that $\Pi_{\mathcal{S}}$ is the orthogonal
  projection onto $\mathcal{S}^{\perp}$. 
Since $\Pi_{\mathcal{S}}$ is surjective, i.e., $\im(\Pi_{\mathcal{S}}) =
        \mathcal{S}^{\perp}$, it follows from the definition 
of the Moore-Penrose pseudoinverse and its uniqueness 
that 
      $\Pi^{\dagger}_{\mathcal{S}}: \mathcal{S}^{\perp}\to
      \mcL^2(\Omega)$ is given by 
      $\Pi^{\dagger}_{\mathcal{S}}(u) = u$.

Given a matrix $A\in\R^{n\times{n}}$, the matrix measure associated to
  the standard Euclidean $2$-norm, $\mu_2(A)$, has the following 
  property~\cite{ZA-EDS:14b}: 
  $\mu_{2}(A)\leq c$ if and only if $\frac{A+A^\top}{2}\preceq cI_n$, i.e.,
  $cI_n-\frac{A+A^\top}{2}$ is positive semi-definite.

\begin{theorem}[Partial and semi-contraction of reaction-diffusion systems]
\label{th:part-RD}
Consider the reaction-diffusion system~\eqref{ex_rdf} with the standard assumptions on $f$, $\Gamma$, and over a bounded and
convex set domain $\Omega\subset\R^m$. Suppose that there exists a positive
definite matrix $P\in\R^{n\times n}$ such that $\mu_2(P\Gamma)\geq 0$ and
$\mu_2(P(Df(x)-\lambda_2\Gamma))\leq -c$ for all $x\in\Omega$ and some
constant $c>0$. Define
$u\mapsto\norm{u}_{\Pi_{\mathcal{S}},P^{1/2}}:=\norm{\Pi_{\mathcal{S}}(P^{1/2}u)}$
with the set $\mathcal{S}$ as in~\eqref{eq:setS},
and let $\lambda_{\max}(P)$ be the largest eigenvalue of $P$.  Then,
\begin{enumerate}
  \item \label{it:ex-1}
system~\eqref{ex_rdf} is partially contractive with respect to
$\norm{\cdot}_{\Pi_{\mathcal{S}},P^{1/2}}$, that is, for every solution $u:\real_{\ge 0}\times \Omega\to \real^n$,
\begin{equation*}
 \norm{u(t,\cdot)}_{\Pi_{\mathcal{S}},P^{1/2}} \leq e^{-\frac{c}{\lambda_{\max}(P)}}\norm{u(0,\cdot)}_{\Pi_{\mathcal{S}},P^{1/2}},    
  \end{equation*}
\item \label{it:ex-2} $\ker(\Pi_{\mathcal{S}})=\mathcal{S}$ is an invariant subspace and all trajectories exponentially converge to it; and 
  \item \label{it:ex-3} if additionally $f(u)=Au$,
    $A\in\R^{n\times{n}}$, then~\eqref{ex_rdf} is semi-contractive with respect to
$\norm{\cdot}_{\Pi_{\mathcal{S}},P^{1/2}}$, that is, for every solution $u,v:\real_{\ge 0}\times \Omega\to \real^n$,
\begin{equation*}
 \norm{u(t,\cdot)-v(t,\cdot)}_{\Pi_{\mathcal{S}},P^{1/2}} \\\leq e^{-\frac{c}{\lambda_{\max}(P)}}\norm{u(0,\cdot)-v(0,\cdot)}_{\Pi_{\mathcal{S}},P^{1/2}}.
  \end{equation*}
\end{enumerate}
\end{theorem}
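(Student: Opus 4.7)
My plan is to set $\T:=\Pi_{\mathcal{S}}\circ P^{1/2}:\mcL^2(\Omega)\to\mathcal{S}^\perp$, where $P^{1/2}$ acts pointwise and the two factors commute, and to apply Theorem~\ref{partial-integral-contractivity-subs} for~\ref{it:ex-1}--\ref{it:ex-2} and Theorem~\ref{partial-integral-contractivity} for~\ref{it:ex-3}. With this choice $\T$ is bounded, linear and surjective, $\ker(\T)=\mathcal{S}$, and $\norm{u}_{\T}=\norm{\Pi_{\mathcal{S}}(P^{1/2}u)}=\norm{u}_{\Pi_{\mathcal{S}},P^{1/2}}$, so the seminorms match. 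The positive invariance of $\mathcal{S}$ is transparent: spatially constant data make $\nabla^2u\equiv 0$ and reduce the PDE to the ODE $\dot{c}=f(c)$, which preserves spatial constancy; alternatively it drops out of Theorem~\ref{partial-integral-contractivity-subs} once the integral condition is established. The exponential approach to $\mathcal{S}$ in~\ref{it:ex-2} then follows from the seminorm decay, since $\norm{u}_{\Pi_{\mathcal{S}},P^{1/2}}$ measures the $P^{1/2}$-weighted distance of $u$ from $\mathcal{S}$.

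\textbf{Main computation.}
The work is to verify $\semin{u}{F(u)}_{\T}\leq -c'\norm{u}_{\T}^2$ for $F(u):=f(u)+\Gamma\nabla^2 u$. First, the Neumann boundary condition together with the divergence theorem gives $\overline{\Gamma\nabla^2 u}=0$, so $\T F(u)=P^{1/2}(f(u)-\overline{f(u)}+\Gamma\nabla^2 u)$. Because $\overline{f(u)}-f(\bar u)$ is constant in $x$, it pairs to zero against the mean-zero $u-\bar u$, and I may replace $\overline{f(u)}$ by $f(\bar u)$ inside the pairing. The fundamental theorem of calculus for Fr\'{e}chet derivatives then gives $f(u)-f(\bar u)=M(u,\bar u)(u-\bar u)$ with $M(u,\bar u):=\int_0^1 Df(\bar u+s(u-\bar u))\,ds$. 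Setting $z:=u-\bar u$ (so $\nabla^2 u=\nabla^2 z$), the target inequality becomes
\begin{equation*}
\int_\Omega z^{\top}PMz\,dx+\int_\Omega z^{\top}P\Gamma\nabla^2 z\,dx\leq -c'\int_\Omega z^{\top}Pz\,dx.
\end{equation*}
I then split $PM=P(M-\lambda_2\Gamma)+\lambda_2 P\Gamma$. For the reaction piece $\int z^{\top}P(M-\lambda_2\Gamma)z\,dx$, the pointwise bound $y^{\top}By\leq \mu_2(B)y^{\top}y$, combined with convexity of $\mu_2$ (being the largest eigenvalue of the symmetric part) and the hypothesis $\mu_2(P(Df-\lambda_2\Gamma))\leq -c$, yields $z(x)^{\top}P(M(x)-\lambda_2\Gamma)z(x)\leq -c\,z(x)^{\top}z(x)$ pointwise. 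For the remaining term $\int z^{\top}P\Gamma(\nabla^2 z+\lambda_2 z)\,dx$, I would expand $z$ in the Neumann eigenbasis $\{e_k\}_{k\geq 2}$ of $\mathcal{S}^{\perp}$, so it reduces to $\sum_{k\geq 2}(\lambda_2-\lambda_k)c_k^{\top}P\Gamma c_k\leq 0$, using $\lambda_k\geq\lambda_2$ for $k\geq 2$ together with the positive semi-definiteness of the symmetric part of $P\Gamma$ (the natural content of the $\mu_2(P\Gamma)\geq 0$ hypothesis needed to make the bound compatible with the Poincar\'{e}-type gap). Finally, $z^{\top}z\geq z^{\top}Pz/\lambda_{\max}(P)$ upgrades the pointwise bound to the required rate $c/\lambda_{\max}(P)$, and Theorem~\ref{partial-integral-contractivity-subs} concludes~\ref{it:ex-1}--\ref{it:ex-2}.

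\textbf{Linear case and main obstacle.}
For~\ref{it:ex-3}, with $f(u)=Au$, one has $F(u)-F(v)=A(u-v)+\Gamma\nabla^2(u-v)$; setting $w:=u-v$ and repeating the computation with the constant matrix $A$ in place of $M(u,\bar u)$ (the reduction $\overline{Aw}=A\bar w$ plays the role previously played by the FTC step) gives exactly the same inequality at the level of $\semin{u-v}{F(u)-F(v)}_{\T}$, so Theorem~\ref{partial-integral-contractivity} delivers the semi-contractive estimate. The main technical obstacle is that in the nonlinear case $M(u,\bar u)$ depends on $x$ through $u(x)$, which blocks a direct Neumann-eigenbasis argument applied to the full left-hand side. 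The resolution is precisely the split $PM=P(M-\lambda_2\Gamma)+\lambda_2 P\Gamma$: the $x$-dependent reaction piece is controlled pointwise via the convexity of $\mu_2$ and the assumed bound along all fibers of $Df$, while the clean, $x$-independent combination $P\Gamma(\nabla^2+\lambda_2)$ is dispatched by the spectral decomposition and the Neumann spectral gap.
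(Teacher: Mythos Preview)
Your proof is correct and follows essentially the same strategy as the paper: verify the integral partial contractivity condition of Theorem~\ref{partial-integral-contractivity-subs} by splitting $\semin{u}{F(u)}_{\T}$ into reaction and diffusion pieces, handling the reaction via the mean-value theorem and the pointwise bound $y^{\top}By\leq\mu_2(B)\,y^{\top}y$, and absorbing the diffusion into the $-\lambda_2\Gamma$ shift before invoking $z^{\top}z\geq z^{\top}Pz/\lambda_{\max}(P)$. The one place you depart from the paper is the diffusion estimate: the paper integrates by parts (divergence theorem plus Neumann condition) to obtain $\int_{\Omega} z^{\top}P\Gamma\nabla^2 z\,dx=-\sum_i\|\nabla(Qz)_i\|^2$ with $Q^{\top}Q=\tfrac12(P\Gamma+\Gamma^{\top}P)$, and then applies the Poincar\'{e} inequality $\|\nabla w\|^2\geq\lambda_2\|w\|^2$ for mean-zero $w$; you instead expand $z$ in the Neumann eigenbasis and read off the same bound from $\lambda_k\geq\lambda_2$. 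These are two faces of the same spectral gap, and both require precisely the positive semi-definiteness of the symmetric part of $P\Gamma$ that you correctly identify as the operative content of the hypothesis $\mu_2(P\Gamma)\geq 0$. Your early algebraic split $PM=P(M-\lambda_2\Gamma)+\lambda_2 P\Gamma$ is marginally tidier than the paper's route (which carries the $\int_0^1 d\gamma$ to the end), while the paper's integration-by-parts argument has the advantage of not invoking completeness of the Neumann eigenbasis. The linear case~\ref{it:ex-3} is handled identically in both proofs.
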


The proof can be found in the Appendix.

\begin{remark}
  Statements~\ref{it:ex-1} and~\ref{it:ex-2} of Theorem~\ref{th:part-RD}
  are essentially the same result as~\cite[Theorem~1]{MA:11}; however,
    these statements and statement~\ref{it:ex-3} are now consequences of a
    general contraction theory.
\end{remark}

In Figure~\ref{f:sim1} we simulate the following instance of~\eqref{ex_rdf}: $u=\begin{bmatrix}u_1\\u_2
\end{bmatrix}$, $\Gamma=\begin{bmatrix}
1&0\\0&2
\end{bmatrix}$, $f(u)=0.05\begin{bmatrix}
1/2&1/3\\1/3&1/4
\end{bmatrix}\begin{bmatrix}
u_1\\u_2
\end{bmatrix}$, $\Omega=[0,1]$. 
Set $\alpha(x):=-0.25+0.9x\sin(15x)$ and $\beta(x):=1.75x-0.75$, $x\in\Omega$. In the upper plot of Figure~\ref{f:sim1}, we plot the quantity $\norm{u(t,\cdot)-v(t,\cdot)}_{\Pi_{\mathcal{S}},I_2}$, $t\in[0,0.8]$, for signals $u$ and $v$ under the initial conditions $u_1(0,\cdot)=10\alpha$, $u_2(0,\cdot)=10\beta$, $v_1(0,\cdot)=5\beta$, and $v_2(0,\cdot)=2.5\alpha$. As predicted by the statement~\ref{it:ex-3} of Theorem~\ref{th:part-RD}, this quantity has en exponential decay due to semi-contraction. In the lower three plots, we plot $u_1(t,\cdot)$ for different values of $t$.

\begin{figure}[t]
  \centering
  \subfloat{\includegraphics[width=0.77\linewidth]{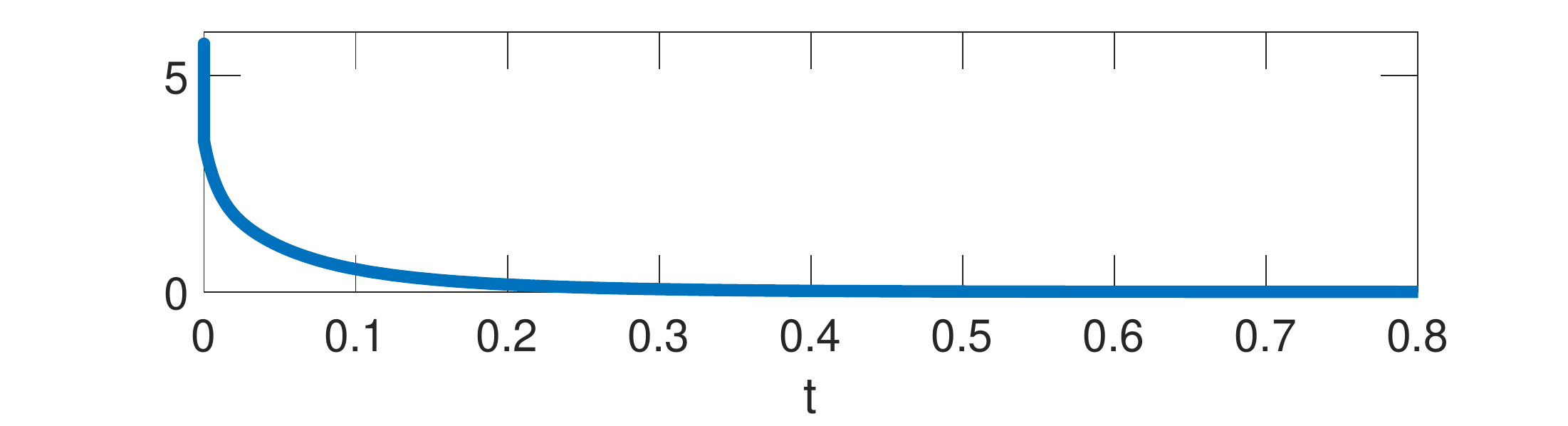}}\\
  \subfloat{\includegraphics[width=0.77\linewidth]{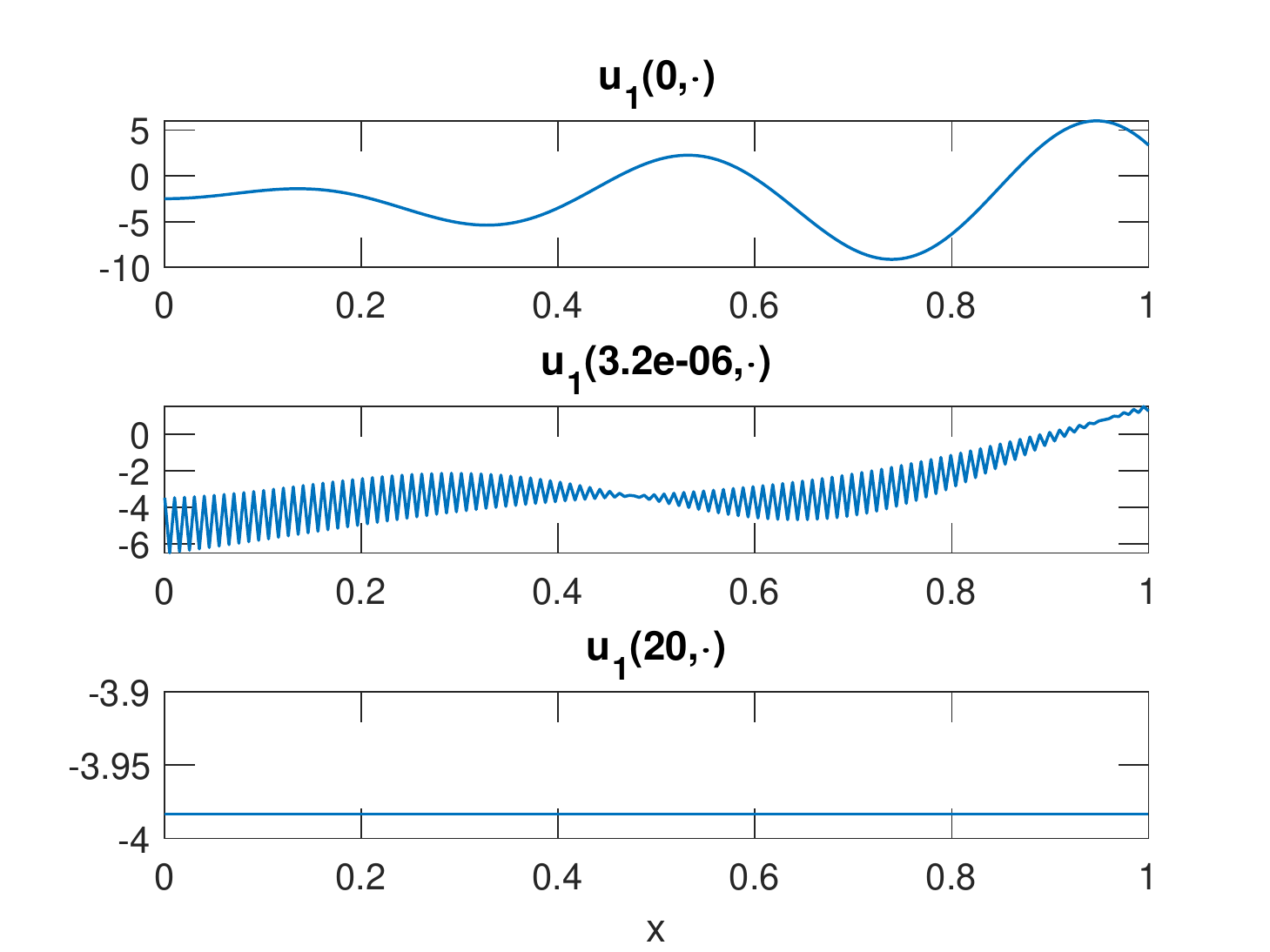}}
\caption{Numerical example for a reaction-diffusion system
%
}

  \label{f:sim1}
\end{figure}

\section{Conclusion}
\label{sec:concl}
This paper presents a general contraction theory for dynamical systems on
Hilbert spaces. We provide sufficient conditions for contraction,
semi-contraction and partial contraction based on operator measures or
operator semi-measures, and on the differentiability of the vector
field. Moreover, when the system is time-invariant, we present weaker conditions that do not require
differentiability. Finally, we present an example of reaction-diffusion
systems.

Our work brings the machinery of contraction theory, so far mainly applied
to ODEs, to other possible application domains related to a variety of
systems that can be expressed as dynamical systems on functional spaces.  
%
%

\section*{Acknowledgment}
The authors thank Prof.\ Sam Coogan for insightful discussions about contraction theory. 

\bibliographystyle{plain}
\bibliography{alias,Main,FB}
\section{Apprendix}

\begin{proof}[\textbf{Proof of Theorem~\ref{theorem_princ2}}]
First, observe that, for any $u,h\in\mcX$, 
$\norm{DF(t,u)h}\leq a(t)\norm{h}$. 
Moreover, observe that the differential equation $\dot{r}=a(t)r$ satisfies that if it has a solution $r(t)$ such that $r(t_0)=0$, then $r(t)= 0$ for any $t\geq t_0$. These two conditions satisfy the hypotheses of~\cite[Theorem~5.3.3]{GEL-VL:72}, and thus we can use~\cite[Lemma~5.4.2]{GEL-VL:72} from which equation~\eqref{eq:ctr-1} follows.
\end{proof}

\begin{proof}[\textbf{Proof of Theorem~\ref{th:part-RD}}]
Note that the reaction-diffusion system we are analyzing can be written as $\frac{\partial u}{\partial t} = F(u)$ where $F:\mcL^2(\Omega)\to\mcL^2(\Omega)$ is defined by $F(u):= f(u) + \Gamma\nabla^2u$. Let $\inprPRS{\cdot}{\cdot}:=\inpr{\Pi_{\mathcal{S}}(P^{1/2}\cdot)}{\Pi_{\mathcal{S}}(P^{1/2}\cdot)}$. 

We start by proving statement~\ref{it:ex-1}. Consider any $u\in \mcL^2(\Omega)$, and set $\tilde{u}=u-\bar{u}$, so that $\tilde{u}\in\mathcal{S}^\perp$. Then,
\begin{equation}
\label{eq:princ-pc}
\inprPRS{u}{F(u)}= \inprPRS{u}{f(u)}+\inprPRS{u}{\Gamma\nabla^2(u)}
\end{equation}

First, from the first term in the right-hand side of~\eqref{eq:princ-pc},
\begin{equation}
\label{eq:part-1}
\begin{aligned}
\inprPRS{u}{f(u)}&=\inpr{\Pi_{\mathcal{S}}(P^{1/2}(u))}{\Pi_{\mathcal{S}}(P^{1/2}(f(u)))}\\
&= \int_{\Omega}\tilde{u}^\top P(f(u)-f(\bar{u}))dx\\
&=\int_0^1\int_{\Omega}\tilde{u}^\top PDf(s(\gamma))\tilde{u}dxd\gamma.
\end{aligned}
\end{equation}
where for the second equality we repeatedly used
$\int_{\Omega}\tilde{u}^\top a dx=0$ for any constant vector
$a\in\R^n$ in $\Omega$, and, since $\Omega$ is convex, the mean-value
theorem: $f(u)-f(\bar{u})=\int_0^1Df(s(\gamma))\tilde{u}d\gamma$
with $s(\gamma)=u+\gamma(\bar{u}-u)$ for the last inequality.
%
Now, from the second term in the right-hand side of~\eqref{eq:princ-pc},
\begin{equation}
\label{eq:tog1}
  \inprPRS{u}{\Gamma\nabla^2u}
  =\int_{\Omega}\tilde{u}^\top P\Gamma\nabla^2\tilde{u}dx,
\end{equation}
where we used $\Pi_{\mathcal{S}}(\nabla^2u)=\nabla^2u-\frac{1}{|\Omega|}\int_{\Omega}\nabla^2udx=\nabla^2\tilde{u}$, since $\int_{\Omega}\nabla^2u_idx=\int_{\partial\Omega}\nabla u_i\cdot\widehat{n}\,d\,\partial\Omega=0$ (where the surface integral after the first equality follows from the divergence theorem, and the last equality follows from the boundary condition in~\eqref{ex_rdf}). 
Note that, for every $i\in \{1,\ldots,n\}$, we have $\nabla^2\tilde{u}_i(x) =
\nabla\cdot(\nabla \tilde{u}_i(x))$. Now, by the product rule, we have that for every $i\in \{1,\ldots,n\}$, 
$\nabla\cdot(\sum_{j=1}^{n} \tilde{u}_i (P\Gamma)
_{ij} \nabla \tilde{u}_j) = \sum_{j=1}^{n} \tilde{u}_i(P\Gamma)
_{ij} \nabla^2 \tilde{u}_j 
+ \sum_{j=1}^{n} (\nabla \tilde{u}_i)^{\top}(P\Gamma)_{ij}
  \nabla \tilde{u}_j$. 
Now, by the divergence theorem, we obtain for every $i\in \{1,\ldots,n\}$, 
$\int_{\Omega} \nabla\cdot(\sum_{j=1}^{n} \tilde{u}_i(P\Gamma)
_{ij} \nabla \tilde{u}_j) dx
=\int_{\partial \Omega} \big(\sum_{j=1}^{n} \tilde{u}_i(P\Gamma)_{ij}\nabla \tilde{u}_j\cdot\widehat{n} \big)dS  = 0$ 
where the last equality follows from the boundary condition in~\eqref{ex_rdf}. Then, from the identity $\tilde{u}^{\top}P\Gamma\nabla^2\tilde{u}= \sum_{i=1}^{n}\sum_{j=1}^{n} \tilde{u}_i(P\Gamma)_{ij} \nabla^2 \tilde{u}_j$ 
we get
$$  \int_{\Omega} \tilde{u}^{\top}P\Gamma
  \nabla^2\tilde{u} dx  =  -\int_{\Omega}\sum_{i=1}^{n}\sum_{j=1}^{n} (\nabla \tilde{u}_i)^{\top}(P\Gamma)_{ij}
  \nabla \tilde{u}_j dx.$$
Moreover, one can check that
\begin{align*}
  \sum_{i=1}^{n}\sum_{j=1}^{n} (\nabla \tilde{u}_i)^{\top} (P\Gamma)_{ij}
  \nabla \tilde{u}_j  = \sum_{k=1}^{m} (\frac{\partial \tilde{u}}{\partial
  x_k})^{\top}(P\Gamma)\frac{\partial \tilde{u}}{\partial
  x_k}.
\end{align*}
Since $\mu_{2}(P\Gamma) \ge 0$,  there exists a positive semi-definite matrix $Q\in \real^{n\times n}$ such that $Q^{\top}Q = \tfrac{1}{2}(P\Gamma+\Gamma^{\top}P^\top)$. This implies that
%
$\sum_{k=1}^{m} (\frac{\partial \tilde{u}}{\partial
x_k})^{\top}P\Gamma\frac{\partial \tilde{u}}{\partial
x_k}   = \sum_{k=1}^{m} (\frac{\partial \tilde{u}}{\partial
x_k})^{\top}Q^{\top}Q\frac{\partial \tilde{u}}{\partial
x_k}= \sum_{k=1}^{m} (\frac{\partial Q\tilde{u}}{\partial
  x_k})^{\top}\frac{\partial Q\tilde{u}}{\partial
  x_k}
  = \sum_{i=1}^{n} (\nabla ((Q\tilde{u})_i))^{\top}\nabla ((Q\tilde{u})_i)$.

Combining all of these results, we finally obtain 
$    \int_{\Omega} \tilde{u}^{\top}P\Gamma
   \nabla^2\tilde{u} dx  = -\sum_{i=1}^{n}\|\nabla (Q\tilde{u})_i\|^2$. 
  Now, since $\int_{\Omega}Q\tilde{u}dx=Q\int_{\Omega}\tilde{u}dx=\vect{0}_n$, we can use the Poincar\'{e} inequality on simply
  connected domains~\cite[Section~1.3]{HA:06} 
(since our domain is convex),
   and obtain 
    $\|\nabla (Q\tilde{u})_i \|^2 \ge \lambda_2 \|(Q\tilde{u})_i\|^2$. 
  As a result, we get
  $\int_{\Omega} \tilde{u}^{\top}P\Gamma\nabla^2\tilde{u} dx \le
    -\lambda_2 \sum_{i=1}^{n}\|(Q\tilde{u})_i\|^2
    = \int_{\Omega} \tilde{u}^{\top} \left(-\lambda_2 P\Gamma\right) \tilde{u} dx$. 
%
  Replacing this result in~\eqref{eq:tog1}, and then replacing the resulting expression with the one in~\eqref{eq:part-1} back in~\eqref{eq:princ-pc}: 
  \begin{align*}
  \inprPRS{u}{F(u)}&= \int_0^1\int_{\Omega}\tilde{u}^\top P\left(Df(s(\lambda))-\lambda_2\Gamma\right)\tilde{u}dxd\gamma\\
&\leq-c\int_0^1\int_{\Omega}\tilde{u}^\top \tilde{u}dxd\gamma\leq-\frac{c}{\lambda_{\max}(P)}\norm{P^{1/2}\tilde{u}}\\
  &=-\frac{c}{\lambda_{\max}(P)}\norm{\tilde{u}}_{\Pi_{\mathbb{S}},P^{1/2}}=-\frac{c}{\lambda_{\max}(P)}\norm{u}_{\Pi_{\mathbb{S}},P^{1/2}}
  \end{align*}
where the inequality comes from the assumption
$\mu_2(P(Df(x)-\lambda_2\Gamma))\leq-c$ for any $x\in\Omega$. This expression
has the form of the integral partial contractivity condition. Although the set of classical solutions endowed with $\inpr{\cdot}{\cdot}$
  is not a Hilbert space, we follow the proof of 
  Theorem~\ref{partial-integral-contractivity-subs} using the Leibniz rule to differentiate the inner product and obtain partial contraction as in statement~\ref{it:ex-1}. Statement~\ref{it:ex-2} follows by noting that
$\norm{u}_{\Pi_{\mathcal{S}},P^{1/2}}=0
\implies P^{1/2}u\in\ker(\Pi_{\mathcal{S}})\implies P^{1/2}u\in\mathcal{S}\implies u\in\mathcal{S}$.
Finally, statement~\ref{it:ex-3} is proved in a similar way to statement~\ref{it:ex-1}, using the difference of any two solutions as a new state variable.
%
%
\end{proof}

\end{document}